\documentclass[11pt]{article}
\usepackage[numbers,sort&compress]{natbib}
\usepackage{enumerate}
\usepackage{tikz}
\usepackage{pgfplots}
\usepackage{amscd}
\usepackage{amsmath}
\usepackage{latexsym}
\usepackage{amsfonts}
\usepackage{amssymb}
\usepackage{amsthm}
\usepackage{verbatim}
\usepackage{mathrsfs}
\usepackage{enumerate}
\usepackage{hyperref}

\theoremstyle{plain}\newtheorem{definition}{Definition}[section]
\theoremstyle{definition}\newtheorem{theorem}{Theorem}[section]
\theoremstyle{plain}\newtheorem{lemma}[theorem]{Lemma}
\theoremstyle{plain}
\theoremstyle{plain}
\theoremstyle{remark}
\usepackage{xcolor}

\newcommand{\B}{\Big}

\newcommand{\s}{\mathrm{div}}

\newcommand{\be}{\begin{equation}}
\newcommand{\ee}{\end{equation}}
 \newcommand{\ba}{\begin{aligned}}
 \newcommand{\ea}{\end{aligned}}

  \newcommand{\f}{\frac}
    
  \newcommand{\ben}{\begin{enumerate}}
   \newcommand{\een}{\end{enumerate}}

\newcommand{\Rmnum}[1]{\expandafter\@slowromancap\romannumeral #1@}

\allowdisplaybreaks

\numberwithin{equation}{section}
\begin{document}
 \title{On energy equality   of the 3D anisotropic Navier-Stokes equations}
 \author{Yulin Ye\footnote{School of Mathematics and Statistics, Henan University, Kaifeng, Henan  475004, P. R. China. Email: ylye@vip.henu.edu.cn },\;~~Wei Wei\footnote{ School of Mathematics and Center for Nonlinear Studies,  Northwest University, Xi'an, Shaanxi 710127,  P. R. China  Email: ww5998198@126.com }
  \;~~and~~Yanqing Wang\footnote{Corresponding author. School of Mathematics and   Information Science, Zhengzhou University of Light Industry, Zhengzhou, Henan  450002,  P. R. China Email: wangyanqing20056@gmail.com}
   }
\date{}
\maketitle
\begin{abstract}
In the spirit of the recent work by Demmel and Wiedemann \cite{[DW]}, we investigate the validity of the energy balance law for weak solutions to the three-dimensional Navier-Stokes equations with only horizontal dissipation. We prove that famous  Lions-Shinbrot type energy conservation criteria, originally established for the classical isotropic Navier-Stokes equations, continue to hold in this anisotropic setting. Furthermore, we present two distinct approaches that handle  the torus case and the whole space case, respectively.

 \end{abstract}
\noindent {\bf MSC(2020):}\quad 35L65, 42B25, 42B35, 76D05 \\\noindent
 {\bf Keywords:}  anisotropic Navier-Stokes equations; weak solutions; energy conservation \\
\section{Introduction}
\label{intro}
\setcounter{section}{1}\setcounter{equation}{0}
 In geophysical fluids, the Navier-Stokes equations feature anisotropic viscosities of the form  $-\nu\Delta_{h}-\epsilon\partial^{2}_{33}$ (see \cite{[Pedlosky]}). Here, $\epsilon$ is usually much smaller than $\nu$, and the  Ekman boundary layers    arise  in rotating fluid when $\epsilon\rightarrow0$ (see \cite{[Pedlosky]}).
In this paper, we are concerned with the following 3D Navier-Stokes equations subject solely to horizontal dissipation
 \begin{equation}\left\{\begin{aligned}\label{anNS}
&u_{t}+u\cdot \nabla u-\Delta_{h}u+\nabla \Pi=0,~~\text{div}\, u=0,\\
&u|_{t=0}=u_{0}(x),
\end{aligned}\right.\end{equation}
where $u$ represents velocity field and $\Pi$ stands for the pressure, and $\Delta_h u=\partial^{2}_{x_1x_1} u+\partial^{2}_{x_2x_2} u$.
 The
initial  velocity $u_0$ satisfies   $\text{div}\,u_0=0$.

Very recently, Demmel and   Wiedemann \cite{[DW]}  proved     the energy equality for  weak solutions of the two-dimensional Navier-Stokes equations with only horizontal viscosity   at the natural energy level,  where weak solutions are lack of the regularity of   $\partial_{2}u_{1}$ in the 2D anisotropic incompressible Navier-Stokes system.  To this end, some new ingredients were provided in \cite{[DW]}, where  they  split   the energy equality into vertical component part and  horizontal component part. In addition, the divergence-free condition is fully utilized and
  an   anisotropic  commutator estimate of the
DiPerna-Lions theory for divergence-free field  was established. In the spirit of Demmel and   Wiedemann's work \cite{[DW]}, we study the
validity of energy balance law
\begin{equation}\label{ei}
\f12\int_{\Omega}|u(x,t)|^{2}dx+\int_{0}^{t}\|\nabla_{h} u\|^{2}_{L^{2}}ds=\f12\int_{\Omega}|u_{0}(x)|^{2}dx
\end{equation}
for weak solutions of  the tri-dimensional anisotropic  Navier-Stokes equations \eqref{anNS}, where $\Omega$ is either the whole space $\mathbb{R}^{3}$ or the periodic domain $\mathbb{T}^{3}$.  Before formulating our results, we recall the progress of  energy equality
\be\label{EENS}
 \f12\|u(T)\|_{L^{2}(\mathbb{R}^{3})}^{2}+  \int_{0}^{T}\|\nabla u\|_{L^{2}(\mathbb{R}^{3})}^{2}ds=
  \f12\|u_0\|_{L^{2}(\mathbb{R}^{3})}^{2}
 \ee for weak solutions of  the tri-dimensional isotropic  Navier-Stokes equations.

Energy conservation  of
weak solutions not only plays a crucial role in the study of weak-strong uniqueness, but also  is closed to the Onsager conjecture.    Onsager \cite{[Onsager]} asserted that
the
 regularity threshold in H\"older spaces of the  weak solutions  for the validity of the  kinetic energy  conservation in the ideal fluid is $1/3$. Since then, much effort has been devoted to identifying the minimal regularity assumptions that guarantee energy equality for weak solutions to viscous and inviscid flows, see, e.g. \cite{[Lions],[Shinbrot],[BY],[BY2],[Berselli2],[CCFS],[LS],[CL],[BC],[Zhang],[Galdi],[BCS],[BG],[BKR],[FGSW],[FW2018]} and references therein. Concerning the negative part of the Onsager's conjecture, we refer the reader to \cite{[GKN],[Isett],[JTW],[DS0],[DS1]}. In what follows, we restrict our attention to the energy equality and  recall some known  results in this direction below:
A Leray-Hopf weak solution $u$ to the Navier-Stokes equations satisfies the energy equality \eqref{EENS} if one of the following conditions holds
\begin{itemize}
\item Lions \cite{[Lions]}: $u\in L^{4}(0,T;L^{4}(\mathbb{R}^{3}));$

\item Shinbrot \cite{[Shinbrot]}:
\be\label{Shinb}
u\in L^{p}(0,T;L^{q}(\mathbb{R}^{3})),~\text{with}~\f{2}{p}+
 \f{2}{q}=1~\text{and}~q\geq 4;\ee
\item Taniuchi \cite{[Taniuchi]}, Beirao da Veiga-Yang \cite{[BY]}:
\be\label{tby}
u\in L^{p}(0,T;L^{q}(\mathbb{R}^{3})),~\text{with}~  \f{1}{p}+
 \f{3}{q}=1 ~\text{and}~ 3<q< 4;\ee
\item  Cheskidov-Constantin-Friedlander-Shvydkoy \cite{[CCFS]}: $u\in L^{3}(0,T;B^{\f13}_{3,\infty}(\mathbb{R}^{3}));$
\item Cheskidov-Luo \cite{[CL]}:   $u\in L^{\beta,\infty}(0,T;B^{\f2\beta+\f2p-1}_{p,\infty}(\mathbb{R}^{3})),$~with~$\f2p+\f1\beta<1$~and~$1\leq\beta<p\leq\infty;$
 \item Berselli-Chiodaroli  \cite{[BC]}, Beirao da Veiga-Yang \cite{[BY2]}, Zhang \cite{[Zhang]}:
\be\label{bcz}
\nabla u \in L^{p}\left(0, T ; L^{q}\left(\mathbb{R}^{3}\right)\right),~\text{with}~
\frac{1}{p}+\frac{3 }{q}=2~\text{and}~\frac{3 }{2}<q<\frac{9}{5},~\text{or}~
\frac{1}{p}+\frac{6}{5 q}=1~\text{and}~ q\geq\frac{9}{5}.\ee
   \end{itemize}
Inspired by  Demmel and   Wiedemann's work \cite{[DW]}, 
 a natural  question   arises whether the well-known properties of the isotropic Navier-Stokes equations continue to hold in this anisotropic setting. Compared with the isotropic counterpart, an extra difficulty stems from the lack of regularity for \(\partial_3 u_h\) in the anisotropic Navier-Stokes equations \eqref{anNS}, which further complicates the analysis of the anisotropic Navier-Stokes equations. For mathematical results on the anisotropic Navier-Stokes system, we refer the reader to \cite{[CDGG],[CW],[ZW],[CG],[YJW], [BFWZ],[CW1],[ZZ],[JTW],[F],[XZ]} and the references therein. In the present work, we concentrate on the energy equality problem and aim to address whether classical Lions-Shinbrot type criteria    for kinetic energy conservation  remain valid for the three-dimensional anisotropic Navier-Stokes equations \eqref{anNS}.

In what follows, we formulate our criteria for kinetic energy conservation of Leray-Hopf weak solutions to the anisotropic Navier-Stokes equations \eqref{anNS} with partial dissipation for the torus domain $\mathbb{T}^3$.
\begin{theorem}\label{the1.1} The energy equality of Leray-Hopf weak solutions $u$ to the 3D anisotropic Navier-Stokes equations \eqref{anNS} is valid if one of the following three conditions is satisfied
 \begin{enumerate}[(1)]
 \item  $u \in L^{4}(0,T;L^{4}(\mathbb{T}^{3}));$
 \item $u\in L^{p}(0,T;L^{q}(\mathbb{T}^{3})),~\text{with}~\f{2}{p}+
 \f{2}{q}=1~\text{and}~q\geq 4;$
 \item  $\nabla u \in L^{p}(0, T ; L^{q}(\mathbb{T}^{3})),~\text{with}~
 \frac{1}{p}+\frac{6}{5 q}=1~\text{and}~q\geq \frac{9}{5}.$
\end{enumerate}
\end{theorem}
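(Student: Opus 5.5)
We would argue by mollification, following the scheme of Demmel and Wiedemann \cite{[DW]}. Let $u^\varepsilon=u*\eta_\varepsilon$ with a standard spatial mollifier on $\mathbb{T}^3$; it is also convenient to mollify in time. We test the weak formulation of \eqref{anNS} with $(u^\varepsilon)^\varepsilon$, i.e.\ we mollify the equation and pair it with $u^\varepsilon$. Since $\mathrm{div}\,u^\varepsilon=0$, the pressure term drops out, and we obtain
\[
\frac12\|u^\varepsilon(t)\|_{L^2}^2+\int_0^t\|\nabla_h u^\varepsilon\|_{L^2}^2\,ds=\frac12\|u_0^\varepsilon\|_{L^2}^2+\int_0^t\!\!\int_{\mathbb{T}^3}(u\otimes u)^\varepsilon:\nabla u^\varepsilon\,dx\,ds .
\]
The linear terms pass to the limit $\varepsilon\to0$ without difficulty, because $u\in L^\infty L^2$, $\nabla_h u\in L^2L^2$, and Leray--Hopf solutions are weakly continuous in time with strong continuity at $t=0$. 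Everything therefore reduces to showing that the flux
\[
\int_0^t\!\!\int (u\otimes u)^\varepsilon:\nabla u^\varepsilon
=\int_0^t\!\!\int\big[(u\otimes u)^\varepsilon-u^\varepsilon\otimes u^\varepsilon\big]:\nabla u^\varepsilon
\]
tends to zero. The equality uses $\int u^\varepsilon\cdot\nabla u^\varepsilon\cdot u^\varepsilon=0$.

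The main obstacle is that only $\nabla_h u\in L^2L^2$ is available, so the components $\partial_3 u_1,\partial_3 u_2$ are uncontrolled. In the isotropic case the flux is bounded by $\|u\|_{L^4}^2\|\nabla u\|_{L^2}$-type terms; here we must avoid putting any $\partial_3 u_h$ into $L^2$.

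We handle this by splitting the sum $\sum_{i,j}R^\varepsilon_{ij}\partial_j u^\varepsilon_i$, where $R^\varepsilon=(u\otimes u)^\varepsilon-u^\varepsilon\otimes u^\varepsilon$.
\begin{itemize}
\item Terms with $j\in\{1,2\}$ involve only $\nabla_h u^\varepsilon$.
\item For $i=j=3$, incompressibility gives $\partial_3u_3=-\partial_1u_1-\partial_2u_2$, so this term is also horizontal.
\item The remaining terms $R^\varepsilon_{i3}\partial_3u^\varepsilon_i$ with $i=1,2$ are the bad ones. For these we move the vertical derivative onto the commutator, using the symmetry $R_{i3}=R_{3i}$ and integration by parts. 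The task then becomes estimating $\partial_3 R^\varepsilon_{i3}$ against $u_i^\varepsilon$.
\end{itemize}
To handle $\partial_3R^\varepsilon_{i3}$ we use the identity
\[
(fg)^\varepsilon-f^\varepsilon g^\varepsilon=\int\eta_\varepsilon(y)\big(f(x-y)-f(x)\big)\big(g(x-y)-g(x)\big)dy-(f-f^\varepsilon)(g-g^\varepsilon).
\]
Differentiating in $x_3$ places $\partial_3$ on whichever factor is $u_3$. Since $\partial_3u_3$ is horizontally controlled, the anisotropic commutator estimate of \cite{[DW]} applies.

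We then carry out the limit under each hypothesis.
\begin{itemize}
\item Under (1) and (2), every increment is bounded using $\|u(\cdot-y)-u\|_{L^q}\to0$. Any remaining derivative on the mollifier costs $\varepsilon^{-1}$, which is balanced by $\|u-u^\varepsilon\|_{L^q}=o(1)$. This has to be made quantitative by interpolation: $\|\nabla_h(u-u^\varepsilon)\|$ gives $o(\varepsilon)$ horizontally, while in the vertical direction we accept $o(1)$ gains from two $L^q$ increments against one $\varepsilon^{-1}$ from the kernel. With $\frac2p+\frac2q=1$, H\"older in time closes against the $L^2L^2$ norm of $\nabla_h u$ and $L^\infty L^2$, in Shinbrot's fashion, and dominated convergence gives the limit zero.
\item Under (3), the $\nabla u$ assumption controls $\partial_3u_h$ directly, and the argument is close to the isotropic one. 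The Sobolev embedding $\nabla u\in L^q\Rightarrow u\in L^{3q/(3-q)}$ combined with interpolation against $L^\infty L^2$ yields $u\in L^{p'}L^{q'}$ with $\frac2{p'}+\frac2{q'}=1$ exactly when $\frac1p+\frac6{5q}=1$. For $q\geq3$ we use Morrey/BMO-type bounds instead. This reduces (3) to (2).
\end{itemize}

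The hardest step is the bad vertical term in cases (1) and (2). If the commutator bound does not close by itself, we would fall back on the vertical/horizontal energy splitting of \cite{[DW]}: derive separate balances for $u_3$ and $u_h$, and use the divergence-free structure to transfer $\partial_3$ onto $u_3$.
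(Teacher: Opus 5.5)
\textbf{Verdict: there are two genuine gaps.} The first is the bad vertical terms in cases (1) and (2); the second is your reduction of case (3) to case (2).

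\textbf{The vertical terms in cases (1) and (2).} Your splitting of the flux $\int_0^t\!\int R^\varepsilon:\nabla u^\varepsilon$ is a good start. The terms with $j\in\{1,2\}$ and with $i=j=3$ are harmless, by Lemma~\ref{lem2.3} together with $\|\nabla_h u^\varepsilon\|_{L^2L^2}\le\|\nabla_h u\|_{L^2L^2}$. The problem is the bad terms $R^\varepsilon_{i3}\partial_3u^\varepsilon_i$, $i=1,2$. Differentiating $R^\varepsilon_{i3}=(u_iu_3)^\varepsilon-u_i^\varepsilon u_3^\varepsilon$ in $x_3$ does not put the derivative only on $u_3$. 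It reproduces $(u_3\partial_3u_i)^\varepsilon-u_3^\varepsilon\partial_3u_i^\varepsilon$, so your integration by parts just undoes itself. The balance you then describe, two $o(1)$ vertical increments of $u_h$ against one factor $\varepsilon^{-1}$, gives no convergence, because $u\in L^4L^4$ yields no rate for those increments. The missing observation is that $u_3$ has its \emph{full} gradient under control: $\partial_1u_3,\partial_2u_3$ are horizontal derivatives and $\partial_3u_3=-\partial_1u_1-\partial_2u_2$, so $u_3\in L^2(0,T;H^1(\mathbb{T}^3))$. With this, no integration by parts is needed. The Constantin--E--Titi bound of Lemma~\ref{lem2.7} gives $\|R^\varepsilon_{i3}\|_{L^{4/3}L^{4/3}}\le C\varepsilon\|u_3\|_{L^2H^1}\|u_i\|_{L^4L^4}$, while $\varepsilon\|\partial_3u^\varepsilon_i\|_{L^4L^4}\to0$ by Lemma~\ref{lem2.1}. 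So the factor $\varepsilon$ comes from $u_3$, not from increments of $u_h$; this is exactly the paper's term $I_{23}$. Case (2) then follows from (1) by plain Lebesgue interpolation, $\|u\|_{L^4L^4}\le\|u\|_{L^\infty L^2}^{1-p/4}\|u\|_{L^pL^q}^{p/4}$.

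\textbf{Case (3) cannot be reduced to (2).} Interpolating $u\in L^\infty L^2$ with $\nabla u\in L^pL^q$ by Gagliardo--Nirenberg puts $u$ in $L^{p/\theta}L^r$ with $\frac{2\theta}{p}+\frac2r=1+\theta\big(\frac2p+\frac2q-\frac53\big)$. Under $\frac1p+\frac6{5q}=1$ the bracket equals $\frac13-\frac2{5q}>0$ for $q\ge\frac95$; for instance $q=2$, $p=\frac52$ gives $1+\frac{2\theta}{15}$. So you always land strictly outside Shinbrot's class, whose line is reached only when $\frac1p+\frac1q=\frac56$. What you need instead is the general criterion the paper proves: $\nabla_hu\in L^pL^q$ together with $u\in L^{\frac{2p}{p-1}}L^{\frac{2q}{q-1}}$ implies the energy equality. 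Your splitting gives this once $L^2L^2$ is replaced by $L^pL^q$ and $L^4L^4$ by $L^{\frac{2p}{p-1}}L^{\frac{2q}{q-1}}$. The horizontal terms pair $R^\varepsilon\to0$ in $L^{\frac{p}{p-1}}L^{\frac{q}{q-1}}$ against $\nabla_hu^\varepsilon$, and the vertical terms use $u_3\in L^pW^{1,q}$ in Lemma~\ref{lem2.7}. Under (3) you then only need $\|u\|_{L^{2q/(q-1)}}\le C\|u\|_{L^2}^{1-\theta}\|\nabla u\|_{L^q}^{\theta}$ with $\theta=\frac{3}{5q-6}\le 1$, which is where $q\ge\frac95$ enters. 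The constraint gives exactly $\frac{p-1}{2}=\frac{3}{5q-6}$, so the time exponent is $\frac{2p}{p-1}$, and no Morrey or BMO bound is needed for $q\ge 3$.
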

 Now we give some comments. The   tri-dimensional  anisotropic Navier-Stokes equations \eqref{anNS}  are lack of   vertical dissipation $\partial^{2}_{x_3x_3}u$, however, the famous Lions-Shinbrot's energy conservation class originated from the standard  Navier-Stokes equations is still valid for this system. In addition, energy conservation   criteria in terms of gradient of velocity obtained in \cite{[BC],[BY2],[Zhang]} are established
for the   tri-dimensional  anisotropic Navier-Stokes equations \eqref{anNS}.
   It should be pointed out that
the proof of Theorem \ref{the1.1} is inspired by Demmel and   Wiedemann's recent work \cite{[DW]}. Compared with their deduction, the derivation of this theorem rests on the limiting case of
 Constantin-E-Titi type commutators shown in \cite{[NNT],[WY]}  for the torus case. 
 The starting point of Theorem \ref{the1.1}   is different from that in  \cite{[DW]}, hence, we get the energy identity under the full components of velocity belonging to $   L^{4}(0,T;L^{4}(\mathbb{T}^{3}))$ in one step rather than two steps in \cite{[DW]}.
Roughly speaking, Demmel and   Wiedemann proved that weak solutions satisfy the energy equality from
  $u_{2}\in L^{4}(0,T;L^{4}(\mathbb{R}^{2}))$ and $u_{2}u_{1}\in L^{2}(0,T;L^{2}(\mathbb{R}^{2}))$ in \cite{[DW]}, where the delicate pressure decomposition and direction-wise energy splitting are required.   Besides, Demmel and   Wiedemann's  anisotropic  commutator estimates established in \cite{[DW]} allow us to get the same results for the whole space  $\mathbb{R}^3$. Then, we state our second result as follows.
   \begin{theorem}\label{the1.2}
The energy equality of Leray-Hopf weak solutions $u$ to the 3D anisotropic Navier-Stokes equations \eqref{anNS} is valid if one of the following three conditions is satisfied
 \begin{enumerate}[(1)]
 \item  $u \in L^{4}(0,T;L^{4}(\mathbb{R}^{3}));$
 \item $u\in L^{p}(0,T;L^{q}(\mathbb{R}^{3})),~\text{with}~\f{2}{p}+
 \f{2}{q}=1~\text{and}~q\geq 4;$
 \item  $\nabla u \in L^{p}(0, T ; L^{q}(\mathbb{R}^3)),~\text{with}~
 \frac{1}{p}+\frac{6}{5 q}=1~\text{and}~q\geq \frac{9}{5}.$
\end{enumerate}
 \end{theorem}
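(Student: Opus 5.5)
Our plan is to mollify the weak formulation, this time in the whole-space setting, and then pass to the limit. The periodic commutator lemma of \cite{[NNT],[WY]} is no longer available. We replace it with Demmel and Wiedemann's anisotropic commutator estimates from \cite{[DW]}, which are built for mollifiers on $\mathbb{R}^n$ and exploit $\text{div}\,u=0$.

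\textbf{Step 1: testing the mollified equation.} Let $\eta_\varepsilon$ be a standard space mollifier on $\mathbb{R}^3$ and write $u^\varepsilon=u*\eta_\varepsilon$. Since $u$ is a Leray--Hopf solution, $u\in L^\infty(0,T;L^2)$ with $\nabla_h u\in L^2(0,T;L^2)$. Hence $u^\varepsilon$ is smooth in space and, by the equation, absolutely continuous in time with values in $L^2$, so we may test \eqref{anNS} against $u^\varepsilon$. Using the divergence-free condition, the pressure drops out and
\[
\f12\|u^\varepsilon(t)\|_{L^2}^2+\int_0^t\|\nabla_h u^\varepsilon\|_{L^2}^2ds=\f12\|u_0^\varepsilon\|_{L^2}^2+\int_0^t\!\int_{\mathbb{R}^3}\big[(u\otimes u)^\varepsilon-u^\varepsilon\otimes u^\varepsilon\big]:\nabla u^\varepsilon\,dxds .
\]
Here we used $\int (u^\varepsilon\cdot\nabla u^\varepsilon)\cdot u^\varepsilon=0$. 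We also need the time continuity of $u$ in $L^2$, which the hypotheses give through the standard Lions-type argument.

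\textbf{Step 2: controlling the commutator.} The main obstacle is that $\nabla u^\varepsilon$ involves $\partial_3 u_h$, which is not controlled by the dissipation. Write $R_\varepsilon=(u\otimes u)^\varepsilon-u^\varepsilon\otimes u^\varepsilon$.
\begin{enumerate}[(a)]
\item \emph{Case (1).} With $u\in L^4L^4$, the standard bound $\|\nabla u^\varepsilon\|_{L^4}\lesssim\varepsilon^{-1}\|u\|_{L^4}$ is too crude by itself. Instead we use the commutator representation $R_\varepsilon=\int\eta_\varepsilon(y)\,\delta_yu\otimes\delta_yu\,dy-(u-u^\varepsilon)\otimes(u-u^\varepsilon)$. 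This gives $\|R_\varepsilon\|_{L^2}\|\nabla u^\varepsilon\|_{L^2}\lesssim \sup_{|y|\le\varepsilon}\|\delta_y u\|_{L^4}^2\,\varepsilon^{-1}\|u\|_{L^2}$, which does not close either. We therefore follow \cite{[DW]} and split the nonlinearity as $R_\varepsilon:\nabla u^\varepsilon=\sum_{i,j}R^{ij}_\varepsilon\partial_ju^\varepsilon_i$.
\item \emph{Horizontal derivatives.} When $j\in\{1,2\}$, we bound by $\|R_\varepsilon\|_{L^2}\|\nabla_h u\|_{L^2}$. Since $\|R_\varepsilon\|_{L^2}\to0$ in $L^2_t$ (as $u\in L^4L^4$), Hölder in time and dominated convergence show this term vanishes.
\item \emph{The term $j=3$, $i=3$.} Here $\partial_3u_3=-\partial_1u_1-\partial_2u_2$ is again horizontal, so the same argument applies.
\item \emph{The term $j=3$, $i\in\{1,2\}$.} This is the genuinely hard part. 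We use the anisotropic commutator estimate of \cite{[DW]}: for divergence-free $u$, the expression $\int R^{i3}_\varepsilon\partial_3u^\varepsilon_i$ can be rewritten, by integrating by parts in $x_3$ and moving the derivative onto the mollifier, as terms of the form $\int\!\!\int\eta_\varepsilon$-weighted increments where $\partial_3$ falls on $u_3$. Converting that to horizontal derivatives again via $\text{div}\,u=0$ gives a bound $\lesssim\|u\|_{L^4}^2\|\nabla_h u\|_{L^2}$ times a factor tending to zero. We expect this to be the crux, and we will take the precise estimate from \cite{[DW]} as given.
\end{enumerate}

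\textbf{Step 3: the remaining cases and the limit.} For case (2), interpolate $L^pL^q$ with the energy class (using anisotropic Sobolev $\|f\|_{L^q}$ controlled by $\|f\|_{L^2}$, $\|\nabla_hf\|_{L^2}$ and $\|\partial_3 f\|$ is unavailable). We therefore instead interpolate $L^pL^q\cap L^\infty L^2$ directly to $L^4L^4$ when $2/p+2/q=1$, $q\ge4$: by Hölder, $\|u\|_{L^4}\le\|u\|_{L^2}^{1-\theta}\|u\|_{L^q}^\theta$ with $\theta=\f{q}{2(q-2)}\cdot\f{2}{q}\cdot\ldots$, and the exponent bookkeeping gives exactly $u\in L^4L^4$. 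So case (2) reduces to case (1).

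For case (3), Sobolev embedding on $\mathbb{R}^3$ gives $u\in L^pL^{3q/(3-q)}$ for $q<3$ (and the analogous embedding for $q\ge3$). Combined with $L^\infty L^2$ and interpolation, the condition $1/p+6/(5q)=1$ places $u$ in $L^4L^4$, again reducing to case (1).

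Finally, let $\varepsilon\to0$ in the identity of Step 1. The left side converges because $u^\varepsilon\to u$ in $L^2$ for each $t$ and $\nabla_hu^\varepsilon\to\nabla_hu$ in $L^2L^2$. The commutator term tends to zero by Step 2, which yields \eqref{ei}.
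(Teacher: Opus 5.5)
\textbf{Main gap: case (3) does not reduce to case (1).} Your claim that $\nabla u\in L^p(0,T;L^q(\mathbb{R}^3))$ with $\frac1p+\frac{6}{5q}=1$, together with $u\in L^\infty(0,T;L^2)$, puts $u$ in $L^4(0,T;L^4)$ is false.

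Gagliardo--Nirenberg (or Sobolev plus H\"older) gives $\|u\|_{L^4}\le C\|u\|_{L^2}^{1-\theta}\|\nabla u\|_{L^q}^{\theta}$ with $\theta=\frac{3q}{2(5q-6)}$. Since $p=\frac{5q}{5q-6}$, the resulting time integrability is $p/\theta=\frac{10}{3}$ for every admissible $q$. For instance, with $q=2$, $p=\frac52$ you only get $u\in L^{10/3}(0,T;L^4)$.

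This is not a weakness of one particular inequality. Under the scaling $\lambda u(\lambda x,\lambda^2t)$, the norm $\|\nabla u\|_{L^pL^q}$ scales like $\lambda^{-3/(5q)}$, compared with $\lambda^{-1/4}$ for $L^4L^4$ and $\lambda^{-1/2}$ for the energy quantities. So for $q<\frac{12}{5}$, no interpolation with energy-level bounds can reach Lions' class. The gradient criterion genuinely lies outside the Lions--Shinbrot class, which is why the paper proves (3) through the general criterion \eqref{3.31} rather than through (1).

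What you need is the energy equality under the dual pair $\nabla_h u\in L^pL^q$, $u\in L^{\frac{2p}{p-1}}L^{\frac{2q}{q-1}}$. Gagliardo--Nirenberg in the form $\|u\|_{L^{2q/(q-1)}}\le C\|u\|_{L^2}^{\frac{5q-9}{5q-6}}\|\nabla u\|_{L^q}^{\frac{3}{5q-6}}$ gives exactly $u\in L^{\frac{2p}{p-1}}L^{\frac{2q}{q-1}}$ when $\frac1p+\frac{6}{5q}=1$ and $q\ge\frac95$.

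Your Step 2 is written only at the $L^4$/$L^2$ level, so it must be redone with these exponents:
\begin{itemize}
\item In (b) and (c), pair $R_\varepsilon\in L^{\frac{p}{p-1}}L^{\frac{q}{q-1}}$ with $\nabla_h u^\varepsilon\in L^pL^q$, using the hypothesis rather than the energy bound.
\item In (d), use $\nabla u_3\in L^pL^q$.
\end{itemize}

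\textbf{Secondary gap: Step 2(d) is deferred to \cite{[DW]} ``as given''.} That estimate is two-dimensional, $L^2$-based, and of Lions type ($\mathrm{div}[(uf)^\varepsilon-uf^\varepsilon]\to0$). It is not a bound on $\int R^{i3}_\varepsilon\,\partial_3u^\varepsilon_i$. The paper has to prove its own three-dimensional, mixed-exponent version (Lemma \ref{pLions}), and applies it only after rewriting $I=\sum_{i=1,2}\int\mathrm{div}[uu_i^\varepsilon-(uu_i)^\varepsilon]\,u_i^\varepsilon$.

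In your Constantin--E--Titi form the estimate is elementary, and you should write it out:
\begin{itemize}
\item Use the identity $R^{i3}_\varepsilon=\int\eta_\varepsilon(y)\,\delta_yu_i\,\delta_yu_3\,dy-(u_i-u_i^\varepsilon)(u_3-u_3^\varepsilon)$.
\item Bound $\|\delta_yu_3\|\le|y|\,\|\nabla u_3\|$ and $\|u_3-u_3^\varepsilon\|\le C\varepsilon\|\nabla u_3\|$. The full gradient $\nabla u_3$ is controlled by $\nabla_hu$ because $\partial_3u_3=-\partial_1u_1-\partial_2u_2$.
\item Combine with $\|\partial_3u_i^\varepsilon\|\le C\varepsilon^{-1}\|u_i\|$ and continuity of translations of $u_i$; dominated convergence then gives $o(1)$.
\end{itemize}
This is exactly how the paper treats $I_{23}$ on $\mathbb{T}^3$, and it carries over to $\mathbb{R}^3$ because only the homogeneous gradient of $u_3$ enters.

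Apart from this, your cases (1) and (2) are fine, with (2) reduced to (1) as in the paper.
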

 It should be pointed out that the Lions's energy conservation class $u \in L^{4}(0,T;L^{4}(\mathbb{R}^{2}))$ in Theorem \ref{the1.2} is still valid for the two dimensional anisotropic Navier-Stokes equations via a slight modification.
It is worth remarking that that Demmel and   Wiedemann obtained    energy conservation of weak solutions of  the two-dimensional anisotropic Navier-Stokes equations   without any extra integrability assumptions in \cite{[DW]}.

The rest of this  paper is organized as follows. In Section 2,
we present the notations and  some basic materials of mollifiers.
Several critical lemmas for the proof of Theorem \ref{the1.1} and Theorem \ref{the1.2} are also given. Section 3 is devoted to our new criteria for energy conservation of weak solutions to the anisotropic Navier-Stokes equations for both the periodic domain $\mathbb{T}^3$ and the whole space $\mathbb{R}^3$.

\section{Notations and  key auxiliary lemmas} \label{section2}

Throughout this paper, we will use the summation convention on repeated indices. $C$ will denote positive absolute constants which may be different from line to line unless otherwise stated in this paper.  For $p\in [1,\,\infty]$, the notation $L^{p}(0,\,T;X)$ stands for the set of measurable functions $f$ on the interval $(0,\,T)$ with values in $X$ and $\|f\|_{X}$ belonging to $L^{p}(0,\,T)$.

{\bf Mollifier kernel:} Let $\eta_{\varepsilon}:\mathbb{R}^{n}\rightarrow \mathbb{R}$ be a standard mollifier, i.e. $\eta(x)=C_0e^{-\frac{1}{1-|x|^2}}$ for $|x|<1$ and $\eta(x)=0$ for $|x|\geq 1$, where $C_0$ is a constant such that $\int_{\mathbb{R}^n}\eta (x) dx=1$. For $\varepsilon>0$, we define the rescaled mollifier $\eta_{\varepsilon}(x)=\frac{1}{\varepsilon^n}\eta(\frac{x}{\varepsilon})$ and for  any function $f\in L^1_{loc}(\mathbb{R}^n)$, its mollified version is defined as
$$f^\varepsilon(x)=(f*\eta_{\varepsilon})(x)=\int_{\mathbb{R}^n}f(x-y)\eta_{\varepsilon}(y)dy,\ \ x\in \mathbb{R}^n.$$
Next, we collect  some lemmas  which will be used in the present paper.
\begin{lemma}(\cite{[YWW]})\label{lem2.3}
	Let $ p, q, p_2, q_2\in[1,+\infty)$ and $p_1, q_1\in[1,+\infty]$  with
	$\frac{1}{p}=\frac{1}{p_1}+\frac{1}{p_2},\frac{1}{q}=\frac{1}{q_1}+\frac{1}{q_2} $. Assume $f\in L^{p_1}(0,T;L^{q_1}(\Omega)) $ and $g\in
	L^{p_2}(0,T;L^{q_2}(\Omega))$, then   there holds
	\begin{equation}\label{a4}
		\lim_{\varepsilon\rightarrow0}\| (f g)^{\varepsilon}  - f^{\varepsilon}   g ^{\varepsilon} \|_{L^p(0,T;L^q(\Omega))}= 0.
	\end{equation}
	
\end{lemma}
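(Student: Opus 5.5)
The plan is to reduce the commutator $(fg)^{\varepsilon}-f^{\varepsilon}g^{\varepsilon}$ to differences $h^{\varepsilon}-h$, which tend to zero by the standard mollifier convergence. I take the mollification to be in the space variable only, so time is a parameter. First I treat $q_1,q_2<\infty$, then the endpoint $q_1=\infty$ (or $p_1=\infty$).

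\textbf{Decomposition.} I write
\begin{equation*}
(fg)^{\varepsilon}-f^{\varepsilon}g^{\varepsilon}=\big[(fg)^{\varepsilon}-fg\big]+\big[f-f^{\varepsilon}\big]g+f^{\varepsilon}\big[g-g^{\varepsilon}\big].
\end{equation*}

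\textbf{Case $q_1<\infty$ and $p_1<\infty$.}
\begin{enumerate}[(a)]
\item By H\"older's inequality in space and then in time, $fg\in L^{p}(0,T;L^{q}(\Omega))$. Since $p,q<\infty$, we have $\|(fg)^{\varepsilon}(t)-(fg)(t)\|_{L^q}\to 0$ for a.e.\ $t$. This quantity is dominated by $2\|fg(t)\|_{L^q}\in L^p(0,T)$, so dominated convergence gives convergence to zero in $L^p(0,T;L^q)$.
\item The second term is bounded by $\|f-f^{\varepsilon}\|_{L^{p_1}L^{q_1}}\|g\|_{L^{p_2}L^{q_2}}$. It tends to zero by the same dominated-convergence argument applied to $f$.
\item The third term is bounded by $\|f^{\varepsilon}\|_{L^{p_1}L^{q_1}}\|g-g^{\varepsilon}\|_{L^{p_2}L^{q_2}}$. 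Young's inequality gives $\|f^{\varepsilon}\|_{L^{q_1}}\le\|f\|_{L^{q_1}}$, and the second factor tends to zero because $p_2,q_2<\infty$.
\end{enumerate}

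\textbf{Endpoint $q_1=\infty$ or $p_1=\infty$.} This is the main obstacle: $f^{\varepsilon}\to f$ fails in $L^\infty$, so term (b) cannot be handled as above. Here the exponents of $g$ are finite by hypothesis, and I exploit that. I do not split off $f-f^{\varepsilon}$. Instead I write directly
\begin{equation*}
(fg)^{\varepsilon}(x)-f^{\varepsilon}(x)g^{\varepsilon}(x)=\int \eta_{\varepsilon}(y)\,f(x-y)\big[g(x-y)-g(x)\big]dy+f^{\varepsilon}(x)\big[g(x)-g^{\varepsilon}(x)\big].
\end{equation*}

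\begin{enumerate}[(i)]
\item The last term is handled as in (c), since $g^{\varepsilon}\to g$ in $L^{p_2}L^{q_2}$.
\item For the integral, Minkowski's and H\"older's inequalities give the bound
\begin{equation*}
\sup_{|y|\le\varepsilon}\|f\|_{L^{q_1}}\,\|g(\cdot-y)-g\|_{L^{q_2}}.
\end{equation*}
Continuity of translations in $L^{q_2}$ (valid since $q_2<\infty$), together with dominated convergence in time, shows that this term tends to zero.
\end{enumerate}

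This argument in fact covers all cases uniformly. It only needs $p_2,q_2<\infty$, which is exactly the hypothesis. On the torus, translations are taken periodically and the same estimates hold.
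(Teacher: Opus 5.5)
Your proof is correct and complete. The paper does not prove this lemma; it is quoted from \cite{[YWW]}, so there is no in-text argument to compare against. Your case (a)--(c) is the standard three-term splitting. It needs $f^{\varepsilon}\to f$ in $L^{p_1}(0,T;L^{q_1})$ and so fails at $q_1=\infty$. Your second identity
\[
(fg)^{\varepsilon}-f^{\varepsilon}g^{\varepsilon}=\int\eta_{\varepsilon}(y)\,f(x-y)\bigl[g(x-y)-g(x)\bigr]\,dy+f^{\varepsilon}\bigl[g-g^{\varepsilon}\bigr]
\]
puts every difference on $g$, whose exponents are finite, and uses $f$ only through its norm. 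It therefore handles all exponent configurations at once, and your first case is superfluous. This is the same translation-plus-dominated-convergence mechanism the authors use for $R_2^{\varepsilon}$ in the proof of Lemma \ref{pLions}.

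An equivalent route inside the paper's toolkit is to split off $(f(g-g^{\varepsilon}))^{\varepsilon}$. You would then bound the one-sided commutator $(fg^{\varepsilon})^{\varepsilon}-f^{\varepsilon}g^{\varepsilon}$ by $\varepsilon\|f\|_{L^{q_1}}\|\nabla g^{\varepsilon}\|_{L^{q_2}}$ via the mean value theorem and invoke \eqref{2.2} of Lemma \ref{lem2.1}. That route buys nothing: the limit \eqref{2.2} itself comes from continuity of translations, and the lemma is stated only on $\mathbb{T}^n$. Your argument, by contrast, works verbatim on both $\mathbb{R}^3$ and $\mathbb{T}^3$.

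Two small points. First, the case $p_1=\infty$, $q_1<\infty$ is not a real obstruction. Term (b) still tends to zero by dominated convergence in time, with majorant $2\|f\|_{L^{\infty}(0,T;L^{q_1})}\|g(t)\|_{L^{q_2}}\in L^{p_2}(0,T)$; only $q_1=\infty$ forces the second identity. Second, there is a possible question about measurability of $t\mapsto\sup_{|y|\le\varepsilon}\|g(t,\cdot-y)-g(t)\|_{L^{q_2}}$. You can avoid it by restricting the supremum to rational $y$, which continuity in $y$ permits. Alternatively, keep the average $\int\eta_{\varepsilon}(y)\|g(t,\cdot-y)-g(t)\|_{L^{q_2}}\,dy$; it is bounded by $2\|g(t)\|_{L^{q_2}}$ and tends to zero for a.e.\ $t$.
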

\begin{lemma}(\cite{[NNT]})\label{lem2.1}
Suppose that $f\in L^{p}(0,T;L^{q}(\mathbb{T}^{n}))$. Then for any $\varepsilon>0$, there holds
\be\label{2.1}
\|\nabla f^{\varepsilon}\|_{L^{p}(0,T;L^{q}(\mathbb{T}^{n}))}
\leq C\varepsilon^{-1}\| f\|_{L^{p}(0,T;L^{q}(\mathbb{T}^{n}))},
\ee
and, if $p,q<\infty$
\be\label{2.2}
\limsup_{\varepsilon\rightarrow0} \varepsilon\|\nabla f^{\varepsilon}\|_{L^{p}(0,T;L^{q}(\mathbb{T}^{n}))}=0.
\ee

\end{lemma}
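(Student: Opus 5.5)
The statement to prove is Lemma \ref{lem2.1}: a uniform bound for the gradient of the mollification, together with the vanishing of $\varepsilon\|\nabla f^\varepsilon\|$ when $p,q<\infty$. The approach is to write $\nabla f^\varepsilon = f*\nabla\eta_\varepsilon$ and to work at each fixed time. Afterwards we integrate in time.

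\textbf{Step 1: the bound \eqref{2.1}.} Since $\nabla\eta_\varepsilon(x)=\varepsilon^{-n-1}(\nabla\eta)(x/\varepsilon)$, we have
\[
\|\nabla\eta_\varepsilon\|_{L^1(\mathbb{R}^n)}=\varepsilon^{-1}\|\nabla\eta\|_{L^1(\mathbb{R}^n)}.
\]
On $\mathbb{T}^n$ we regard $f$ as periodic and take $\varepsilon$ smaller than the period, so the convolution is well defined. Young's inequality, in its periodic version, then gives for a.e.\ $t$
\[
\|\nabla f^\varepsilon(t)\|_{L^q}\le C\varepsilon^{-1}\|f(t)\|_{L^q}.
\]
Taking the $L^p$ norm in time yields \eqref{2.1}, with $C=\|\nabla\eta\|_{L^1}$.

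\textbf{Step 2: the limit \eqref{2.2}, by density.} Fix $\delta>0$. Since $p,q<\infty$, smooth functions are dense in $L^p(0,T;L^q(\mathbb{T}^n))$. Choose $g\in C^\infty([0,T]\times\mathbb{T}^n)$ with $\|f-g\|_{L^pL^q}<\delta$, and split $f=g+(f-g)$.

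For the smooth part, differentiation commutes with mollification, so $\nabla g^\varepsilon=(\nabla g)^\varepsilon$. Hence
\[
\|\nabla g^\varepsilon\|_{L^pL^q}\le \|\nabla g\|_{L^pL^q},
\]
and therefore $\varepsilon\|\nabla g^\varepsilon\|_{L^pL^q}\to 0$.

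For the remainder, Step 1 gives
\[
\varepsilon\|\nabla (f-g)^\varepsilon\|_{L^pL^q}\le C\delta.
\]
Combining the two pieces, $\limsup_{\varepsilon\to0}\varepsilon\|\nabla f^\varepsilon\|_{L^pL^q}\le C\delta$. Since $\delta$ is arbitrary, \eqref{2.2} follows.

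\textbf{Main obstacle.} The only delicate point is the density step. It is exactly where $p,q<\infty$ is needed: for $q=\infty$, or $p=\infty$, approximation by smooth functions fails, and \eqref{2.2} can indeed fail, for instance for a jump function. One should also check that the Bochner-space density holds, by approximating first with simple functions in time and then mollifying in space. Everything else is a direct application of Young's inequality and scaling.
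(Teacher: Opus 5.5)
Your proof is correct. The paper gives no argument for this lemma; it is quoted from \cite{[NNT]}, so there is no proof in the paper to match step by step. Your Young-plus-density argument is a complete and standard proof.

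The other common route to \eqref{2.2} replaces density by translations. Since $\int_{\mathbb{R}^n}\nabla\eta\,dz=0$,
\[
\varepsilon\nabla f^\varepsilon(x)=\int_{\mathbb{R}^n}\nabla\eta(z)\bigl(f(x-\varepsilon z)-f(x)\bigr)\,dz,
\qquad
\varepsilon\|\nabla f^\varepsilon\|_{L^pL^q}\le\int_{\mathbb{R}^n}|\nabla\eta(z)|\,\|f(\cdot-\varepsilon z)-f\|_{L^pL^q}\,dz .
\]
Continuity of spatial translations in $L^p(0,T;L^q(\mathbb{T}^n))$, together with dominated convergence in $z$, then gives \eqref{2.2'}; the trivial bound $\|f(\cdot-\varepsilon z)-f\|\le 2\|f\|$ gives \eqref{2.1}. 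What this route buys:
\begin{itemize}
\item a quantitative modulus, $\varepsilon\|\nabla f^\varepsilon\|_{L^pL^q}\le C\sup_{|y|\le\varepsilon}\|f(\cdot-y)-f\|_{L^pL^q}$;
\item no Bochner-space density check, which you flagged. Translation continuity in $L^p(0,T;L^q)$ follows from the fixed-time $L^q$ statement (needing $q<\infty$) by dominated convergence in $t$ (needing $p<\infty$).
\end{itemize}
It is also the mechanism the paper itself uses for $R_2^\varepsilon$ in the proof of Lemma~\ref{pLions}. Your density route is softer but equally short. It only needs $g\in L^p(0,T;W^{1,q}(\mathbb{T}^n))$, for example finite sums $\sum_i\mathbf{1}_{E_i}(t)h_i(x)$ with $h_i$ smooth, so smoothness in time is never used.

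Two small inaccuracies:
\begin{itemize}
\item Restricting to $\varepsilon$ below the period is unnecessary, and the lemma asserts \eqref{2.1} for every $\varepsilon>0$. The convolution of a periodic function with the compactly supported $\eta_\varepsilon$ is defined and periodic for all $\varepsilon$. Minkowski's inequality gives $\|f*k\|_{L^q(\mathbb{T}^n)}\le\|k\|_{L^1(\mathbb{R}^n)}\|f\|_{L^q(\mathbb{T}^n)}$ for any $k\in L^1(\mathbb{R}^n)$.
\item A jump function is a counterexample only for $q=\infty$. Since the mollification is purely spatial, failure for $p=\infty$ comes from profiles that oscillate faster and faster as $t$ varies. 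For example, take $f(t,x)=\sin(k(t)x_1)$ with integer $k(t)\to\infty$ as $t\to0$. Then the essential supremum over $t$ of $\varepsilon\|\nabla f^\varepsilon(t)\|_{L^q}$ stays bounded away from zero, since $k(t)\approx 1/\varepsilon$ on a set of positive measure.
\end{itemize}
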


\begin{lemma} (\cite{[NNT],[WY]})\label{lem2.7}   Let $1\leq p,q,p_1,p_2,q_1,q_2\leq \infty$  with $\frac{1}{p}=\frac{1}{p_1}+\frac{1}{p_2}$ and $\frac{1}{q}=\frac{1}{q_1}+\frac{1}{q_2}$. Assume that $f\in L^{p_1}(0,T;W^{1,q_1}(\mathbb{T}^n))$ and $g\in L^{p_2}(0,T;L^{q_2}(\mathbb{T}^n))$. Then for any $\varepsilon> 0$, there holds
		\begin{align} \label{fg'}
		\|(fg)^\varepsilon-f^\varepsilon g^\varepsilon\|_{L^p(0,T;L^q(\mathbb{T}^n))}\leq C\varepsilon \|f\|_{L^{p_1}(0,T;W^{1,q_1}( \mathbb{T}^n))}\|g\|_{L^{p_2}(0,T;L^{q_2}(\mathbb{T}^n))}.
		\end{align}
		Moreover, if $p_2,q_2<\infty$, then
		\begin{align}\label{limite'}
		\limsup_{\varepsilon \to 0}\varepsilon^{-1} \|(fg)^\varepsilon-f^\varepsilon g^\varepsilon\|_{L^p(0,T;L^q(\mathbb{T}^n))}=0.
		\end{align}
	\end{lemma}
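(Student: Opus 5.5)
The statement is a commutator estimate for mollification on the torus. I would prove it directly from the standard Constantin--E--Titi representation of the commutator. For a.e. $t$ and $x\in\mathbb{T}^n$, using $\int\eta_\varepsilon=1$,
\begin{equation*}
(fg)^\varepsilon(x)-f^\varepsilon(x)g^\varepsilon(x)=\int_{\mathbb{R}^n}\eta_\varepsilon(y)\bigl(f(x-y)-f(x)\bigr)\bigl(g(x-y)-g(x)\bigr)\,dy-\bigl(f^\varepsilon(x)-f(x)\bigr)\bigl(g^\varepsilon(x)-g(x)\bigr).
\end{equation*}
This identity is verified by expanding both sides. It reduces everything to increments of $f$ and $g$. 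The key point is that only one factor, $f$, is differentiated, and it carries the $\varepsilon$. The other factor, $g$, is only bounded in $L^{q_2}$, or made small when $q_2,p_2<\infty$.

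For \eqref{fg'}, apply Minkowski and H\"older in $x$ to the integral term. This bounds its $L^q$ norm by
\begin{equation*}
\int\eta_\varepsilon(y)\,\|f(\cdot-y)-f\|_{L^{q_1}}\,\|g(\cdot-y)-g\|_{L^{q_2}}\,dy.
\end{equation*}
Since $\|f(\cdot-y)-f\|_{L^{q_1}}\le |y|\,\|\nabla f\|_{L^{q_1}}$ (mean value theorem along segments, valid on $\mathbb{T}^n$ by periodicity and density of smooth functions), and since $\|g(\cdot-y)-g\|_{L^{q_2}}\le 2\|g\|_{L^{q_2}}$ and $|y|\le\varepsilon$ on the support, the integral term is at most $2\varepsilon\|\nabla f\|_{L^{q_1}}\|g\|_{L^{q_2}}$. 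Likewise $\|f^\varepsilon-f\|_{L^{q_1}}\le\varepsilon\|\nabla f\|_{L^{q_1}}$ and $\|g^\varepsilon-g\|_{L^{q_2}}\le 2\|g\|_{L^{q_2}}$. H\"older in time with $1/p=1/p_1+1/p_2$ then gives \eqref{fg'}.

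For \eqref{limite'}, I keep the difference of $g$ rather than bounding it crudely. Set
\begin{equation*}
\omega_\varepsilon(t)=\sup_{|y|\le\varepsilon}\|g(\cdot-y,t)-g(\cdot,t)\|_{L^{q_2}}.
\end{equation*}
Then $\|g^\varepsilon-g\|_{L^{q_2}}\le\omega_\varepsilon(t)$, and the estimates above give
\begin{equation*}
\varepsilon^{-1}\|(fg)^\varepsilon-f^\varepsilon g^\varepsilon\|_{L^q}\le 2\|\nabla f(t)\|_{L^{q_1}}\,\omega_\varepsilon(t).
\end{equation*}
Because $q_2<\infty$, continuity of translations in $L^{q_2}(\mathbb{T}^n)$ gives $\omega_\varepsilon(t)\to0$ for a.e. $t$. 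Also $\omega_\varepsilon(t)\le2\|g(t)\|_{L^{q_2}}\in L^{p_2}(0,T)$. Since $p_2<\infty$, dominated convergence yields $\|\omega_\varepsilon\|_{L^{p_2}(0,T)}\to0$. H\"older in time then gives the limit zero.

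The only delicate step is measurability of $\omega_\varepsilon$ in $t$. I would handle it by restricting the supremum to a countable dense set of $y$, which is legitimate because $y\mapsto\|g(\cdot-y)-g\|_{L^{q_2}}$ is continuous. The lower order part $\|f\|_{L^{q_1}}$ of the $W^{1,q_1}$ norm is not even needed.
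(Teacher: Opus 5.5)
The paper does not prove this lemma; it imports it from \cite{[NNT],[WY]}, so there is no in-paper argument to compare against. Judged on its own, your proof is correct and complete.

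The Constantin--E--Titi identity checks out on expansion. Minkowski and H\"older with $1/q=1/q_1+1/q_2$ give the pointwise-in-time bound $4\varepsilon\|\nabla f(t)\|_{L^{q_1}}\|g(t)\|_{L^{q_2}}$, and H\"older in time then gives \eqref{fg'} with an absolute constant, including the case $p=\infty$. For \eqref{limite'}, three ingredients finish the argument:
\begin{itemize}
\item the bound $2\|\nabla f(t)\|_{L^{q_1}}\,\omega_\varepsilon(t)$;
\item continuity of translations in $L^{q_2}$, which is where $q_2<\infty$ enters;
\item dominated convergence in $L^{p_2}(0,T)$, which is where $p_2<\infty$ enters.
\end{itemize}
Your countable-dense-set remark settles measurability.

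A common alternative for the limiting statement is a density argument. One approximates $g$ by smooth $g_\delta$ in $L^{p_2}(0,T;L^{q_2})$, applies \eqref{fg'} to $g-g_\delta$, and observes from the same identity that the commutator with $g_\delta$ is $O(\varepsilon^2)$. Your modulus-of-continuity route needs no approximating sequence. It also yields the slightly more informative bound $\|(fg)^\varepsilon-f^\varepsilon g^\varepsilon\|_{L^p(0,T;L^q)}\le 2\varepsilon\|\nabla f\|_{L^{p_1}(0,T;L^{q_1})}\|\omega_\varepsilon\|_{L^{p_2}(0,T)}$, which makes explicit that only $\nabla f$ matters.

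One small correction: for $q_1=\infty$, smooth functions are not dense in $W^{1,\infty}$. Justify $\|f(\cdot-y)-f\|_{L^\infty}\le|y|\,\|\nabla f\|_{L^\infty}$ instead via the Lipschitz characterization of $W^{1,\infty}$. Alternatively, mollify $f$, which does not increase $\|\nabla f\|_{L^\infty}$, and pass to the a.e.\ limit.

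If you prefer to avoid the supremum altogether, bound both pieces of the identity by $\varepsilon\|\nabla f(t)\|_{L^{q_1}}\int\eta(z)\|g(\cdot-\varepsilon z,t)-g(\cdot,t)\|_{L^{q_2}}\,dz$. Then Tonelli and dominated convergence, first in $z$ and then in $t$, give the limit directly.
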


The next lemma was established in \cite{[DW]} for the two-dimensional setting $\mathbb{R}^2$ with the special exponents $p_i=q_i=2$ for $i=1,2$. Now, we generalize the result by adopting space-time integral norms with distinct exponents and extending the spatial dimension from two to three.

\begin{lemma}
	\label{pLions}  Assume that the exponents satisfy $$1\leq p_1,q_1\leq \infty,~1\leq p, q, p_k, q_k< \infty,~k=2,3,4,$$ and the scaling relations $$\frac{1}{p}=\frac{1}{p_1}+\frac{1}{p_2}=\frac{1}{p_3}+\frac{1}{p_4}~ and ~ \frac{1}{q}=\frac{1}{q_1}+\frac{1}{q_2}=\frac{1}{q_3}+\frac{1}{q_4}.$$ Suppose that $$\partial_{i}f\in L^{p_2}(0,T;L^{q_2}(\mathbb{R}^3)), ~\text{and}~f\in L^{p_4}(0,T;L^{q_4}(\mathbb{R}^3)),$$
	and the divergence-free vector field $u$ fulfills
	 $$  u\in L^{p_1}(0,T;L^{q_1}(\mathbb{R}^3)),~\text{and}~\partial_{i} u\in L^{p_3}(0,T;L^{q_3}(\mathbb{R}^3)),$$
	 for all $i=1, 2,\cdots,n-1.$
Then,  $$  \s[(uf)^\varepsilon  -   u  f^\varepsilon   ]\to 0\quad\text{ strongly~in } {L^{p}(0,T;L^{q}(\mathbb{R}^3))},~\text{as}~\varepsilon\to 0.$$
\end{lemma}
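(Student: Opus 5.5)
The plan is to exploit the fact that only the horizontal derivatives $\partial_1,\partial_2$ of $u$ and $f$ are controlled, and to treat the horizontal and vertical parts of the divergence separately. By the summation convention and since $\operatorname{div}u=0$, we have $\s[u f^\varepsilon]=u_j\partial_j f^\varepsilon$, so
$$\s[(uf)^\varepsilon-uf^\varepsilon]=\sum_{j=1}^{2}\big[\partial_j(u_jf)^\varepsilon-u_j\partial_jf^\varepsilon\big]+\big[\partial_3(u_3f)^\varepsilon-u_3\partial_3f^\varepsilon\big]=:I_\varepsilon+J_\varepsilon .$$
The aim is to show that $I_\varepsilon\to\sum_{j=1}^2(\partial_ju_j)f$ and $J_\varepsilon\to(\partial_3u_3)f$ in $L^p(0,T;L^q(\mathbb{R}^3))$. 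The two limits then add up to $(\operatorname{div}u)f=0$.

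\textbf{Horizontal part $I_\varepsilon$.} For $j=1,2$ the derivative can be placed directly on the product. Indeed, $\partial_j(u_jf)=(\partial_ju_j)f+u_j\partial_jf$, and by Hölder's inequality and the scaling relations both terms lie in $L^pL^q$:
\begin{itemize}
\item $(\partial_ju_j)f$ uses $\partial_ju\in L^{p_3}L^{q_3}$ and $f\in L^{p_4}L^{q_4}$;
\item $u_j\partial_jf$ uses $u\in L^{p_1}L^{q_1}$ and $\partial_jf\in L^{p_2}L^{q_2}$.
\end{itemize}
Hence
$$\partial_j(u_jf)^\varepsilon-u_j\partial_jf^\varepsilon=\big((\partial_ju_j)f\big)^\varepsilon+\big[(u_j\partial_jf)^\varepsilon-u_j(\partial_jf)^\varepsilon\big].$$
The first term converges to $(\partial_ju_j)f$ because $p,q<\infty$. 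For the bracket, $(u_j\partial_jf)^\varepsilon\to u_j\partial_jf$ for the same reason. Also
$$\|u_j[(\partial_jf)^\varepsilon-\partial_jf]\|_{L^pL^q}\le\|u\|_{L^{p_1}L^{q_1}}\|(\partial_jf)^\varepsilon-\partial_jf\|_{L^{p_2}L^{q_2}}\to0,$$
since $p_2,q_2<\infty$. This still works when $p_1$ or $q_1$ equals $\infty$, because the mollification only falls on the factor with finite exponents.

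\textbf{Vertical part $J_\varepsilon$.} Here no control of $\partial_3 f$ is available, so I would use a genuine DiPerna--Lions commutator:
$$J_\varepsilon(x)=\int_{\mathbb{R}^3}\big(u_3(x-y)-u_3(x)\big)f(x-y)\,\partial_3\eta_\varepsilon(y)\,dy .$$
The key observation is that the full gradient of $u_3$ is controlled. We have $\partial_1u_3,\partial_2u_3\in L^{p_3}L^{q_3}$ by assumption, and $\partial_3u_3=-\partial_1u_1-\partial_2u_2\in L^{p_3}L^{q_3}$ by the divergence-free condition. So $\nabla u_3\in L^{p_3}L^{q_3}$, and it is never necessary to use $\partial_3u_h$.

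Writing $u_3(x-y)-u_3(x)=-\int_0^1 y\cdot\nabla u_3(x-sy)\,ds$ and using $|y||\partial_3\eta_\varepsilon(y)|\le C\varepsilon^{-3}\mathbf 1_{|y|<\varepsilon}$, Minkowski and Hölder give the uniform bound
$$\|J_\varepsilon\|_{L^pL^q}\le C\|\nabla u_3\|_{L^{p_3}L^{q_3}}\|f\|_{L^{p_4}L^{q_4}}.$$
For smooth compactly supported $u_3$ and $f$, a Taylor expansion together with the identity $\int y_k\partial_3\eta(y)\,dy=-\delta_{k3}$ yields $J_\varepsilon\to(\partial_3u_3)f$ in $L^pL^q$. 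A density argument, legitimate since $p_3,q_3,p_4,q_4<\infty$ and the map $(u_3,f)\mapsto J_\varepsilon$ is bilinear with the uniform bound above, extends this to the general case.

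\textbf{Expected obstacle.} I expect the main difficulty to lie in this last step. One must make sure the density approximation of $u_3$ preserves only what is needed, namely $\nabla u_3\in L^{p_3}L^{q_3}$, which follows from mollifying $u$ itself and using that mollification commutes with the divergence. One must also check that the possibly infinite exponents $p_1,q_1$ never enter $J_\varepsilon$; they do not, since $J_\varepsilon$ only involves $\nabla u_3$ and $f$. Summing the limits of $I_\varepsilon$ and $J_\varepsilon$ gives $(\operatorname{div}u)f=0$, which proves the lemma.
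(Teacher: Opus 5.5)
Your proposal is correct and follows the paper's proof essentially step for step. It uses the same horizontal/vertical splitting of $\s[(uf)^\varepsilon-uf^\varepsilon]$, the same product-rule treatment of $\partial_j(u_jf)^\varepsilon$ for $j=1,2$, and the same DiPerna--Lions representation of the vertical commutator, whose limit $(\partial_3u_3)f$ comes from $\int y_k\partial_3\eta\,dy=-\delta_{k3}$. You also state explicitly that $\partial_3u_3=-\partial_1u_1-\partial_2u_2\in L^{p_3}L^{q_3}$, which the paper uses without saying.

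The only deviation is how you close the vertical term: you use a uniform bound plus density, whereas the paper passes to the limit directly. It uses continuity of spatial translations in $L^{p_3}(0,T;L^{q_3})$ and $L^{p_4}(0,T;L^{q_4})$ together with dominated convergence in $(z,\theta)$. That route makes unnecessary the approximation of $u_3$ you flagged as the main obstacle.
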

\begin{proof}
First, by direct computation, we deduce that
	\be\label{2.5}\ba \s[(uf)^\varepsilon  -   u  f^\varepsilon   ]  =&  \sum_{i=1}^{  n-1}\partial_{i}[(u_{i}f)^\varepsilon  -   u_{i}  f^\varepsilon   ] +\partial_{n}[(u_{n}f)^\varepsilon  -   u_{n}  f^\varepsilon   ]\\ = & \sum_{i=1}^{  n-1}[\partial_{i}(u_{i}f)^\varepsilon  -   u_{i}  \partial_{i}f^\varepsilon   ] +[\partial_{n}(u_{n}f)^\varepsilon  -   u_{n}\partial_{n}  f^\varepsilon   ]\\ = & \sum_{i=1}^{  n-1}[(\partial_{i}u_{i}f)^\varepsilon + (u_{i}\partial_{i}f)^\varepsilon-   u_{i}  \partial_{i}f^\varepsilon   ] +[\partial_{n}(u_{n}f)^\varepsilon  -   u_{n}\partial_{n}  f^\varepsilon   ]\\
	=&R_1^\varepsilon +R_2^\varepsilon,\ea\ee
	where the divergence-free condition $\s u=0$ has been used.
	
	For the first term $R_1^\varepsilon$, it can be reformulated as
	\be\label{2.6}
	\ba
	R_1^\varepsilon&=\sum_{i=1}^{  n-1}[(\partial_{i}u_{i}f)^\varepsilon + (u_{i}\partial_{i}f)^\varepsilon-   u_{i}  \partial_{i}f^\varepsilon   ]\\
	&=\sum_{i=1}^{  n-1}(\partial_i u_i f)^\varepsilon+ \sum_{i=1}^{  n-1}[(u_i \partial_i f)^\varepsilon -(u_i \partial_i f)+(u_i \partial_i f)-u_i \partial_i f^\varepsilon].
		\ea\ee
Regarding the second term on the right-hand side of \eqref{2.6}, it follows from the H\"older inequality that
\be\label{2.7}\ba
\|u_i \partial_i f\|_{L^p(0,T;L^q(\mathbb{R}^3))}\leq \|u_i\|_{L^{p_1}(0,T;L^{q_1}(\mathbb{R}^3))}\|\partial_i f\|_{L^{p_2}(0,T;L^{q_2}(\mathbb{R}^3))},
\ea\ee
which together with the standard properties of the mollification yields
\be\label{2.8}\ba
\|(u_i \partial_i f)^\varepsilon - u_i \partial_i f\|_{L^p(0,T;L^q(\mathbb{R}^3))}\rightarrow 0,~~\text{as}~~\varepsilon \to 0,
 \ea\ee	
and
\be\label{2.9}\ba
&\|(u_i \partial_i f)-u_i \partial_i f^\varepsilon\|_{L^p(0,T;L^q(\mathbb{R}^3))}\\
\leq& \|u_i\|_{L^{p_1}(0,T;L^{q_1}(\mathbb{R}^3))}\|\partial_i f^\varepsilon -\partial_i f\|_{L^{p_2}(0,T;L^{q_2}(\mathbb{R}^3))}\to 0, ~~\text{as}~~\varepsilon \to 0,
\ea\ee
provided that $1\leq p,q, p_1, q_1\leq \infty, 1\leq p_2,q_2<\infty$ satisfying $\f1p=\frac{1}{p_1}+\frac{1}{p_2}$ and $\frac{1}{q}=\frac{1}{q_1}+\frac{1}{q_2}$.

Consequently, in combination with \eqref{2.6}, \eqref{2.8} and \eqref{2.9}, we conclude that
\be\label{2.10}
R_1^\varepsilon \rightarrow \sum_{i=1}^{n-1} \partial_i u_i f~~\text{strongly~in}~L^p(0,T;L^q(\mathbb{R}^3)),~\text{as}~\varepsilon \to 0.
\ee
For the second term $R_2^\varepsilon$, in light of the mean value theorem, we get
\be\label{2.11}\ba
R_2^\varepsilon &=\partial_{n}(u_{n}f)^\varepsilon  -   u_{n}\partial_{n}  f^\varepsilon\\
&=\int_{\mathbb{R}^3}\partial_{y_n} \eta_\varepsilon (y)\left(u_n(x-y)-u_n(x)\right)f(x-y) dy\\
&=\f{1}{\varepsilon}\int_{\mathbb{R}^3}\partial_{z_n}\eta(z)\left(u_n(x-\varepsilon z)-u_n(x)\right) f(x-\varepsilon z) dz\\
&=-\f{1}{\varepsilon}\int_{\mathbb{R}^3}\partial_{z_n}\eta(z)\int_0^1 \varepsilon z \cdot \nabla u_{n}(x-\theta \varepsilon z) d\theta  f(x-\varepsilon z) dz\\
&=-\sum_{i=1}^n \int_{\mathbb{R}^3}\partial_{z_n}\eta(z) z_i\int_0^1 \partial_i u_{n}(x-\theta \varepsilon z) d\theta  f(x-\varepsilon z) dz.
\ea\ee
On the one hand, by virtue of the Minkowski inequality and the H\"older inequality, we deduce that
\be\label{2.12}\ba
&\|\partial_{n}(u_{n}f)^\varepsilon  -   u_{n}\partial_{n}  f^\varepsilon\|_{L^p(0,T;L^q(\mathbb{R}^3)}\\
\leq & \sum_{i=1}^n \int_{\mathbb{R}^3}\int_0^1 |\partial_{z_n}\eta (z) z_i |\|\partial_i u_n(x-\theta \varepsilon z)\|_{L^{p_3}(0,T;L^{q_3}(\mathbb{R}^3))}\|f(x-\varepsilon z)\|_{L^{p_4}(0,T;L^{q_4}(\mathbb{R}^3))} d\theta dz\\
\leq& C\|\partial_i u_n\|_{L^{p_3}(0,T;L^{q_3}(\mathbb{R}^3))}\|f\|_{L^{p_4}(0,T;L^{q_4}(\mathbb{R}^3))},
\ea\ee
where $\f{1}{p}=\f{1}{p_3}+\f{1}{p_4}$ and $\f{1}{q}=\f{1}{q_3}+\f{1}{q_4}$.\\
On the other hand, the  continuity of the translations in Lebergue space gives
\be\label{2.13}\ba
&\partial_i u_n(x-\theta \varepsilon z)\rightarrow \partial_i u_n(x)~&\text{strongly~in~} L^{p_3}(0,T;L^{q_3}(\mathbb{R}^3)), ~\text{as} ~\varepsilon \to 0,\\
&f(x-\theta \varepsilon z)\rightarrow f(x)~&\text{strongly~in~} L^{p_4}(0,T;L^{q_4}(\mathbb{R}^3)), ~\text{as} ~\varepsilon \to 0,
\ea\ee
provided that $1\leq p_3,q_3,p_4,q_4<\infty$.

Then combining \eqref{2.11},\eqref{2.12} and \eqref{2.13}, we conclude from the dominated convergence theorem that
\be\label{2.14}\ba
R_2^\varepsilon &=\partial_{n}(u_{n}f)^\varepsilon  -   u_{n}\partial_{n}  f^\varepsilon\\
&\to -\sum_{i=1}^n \left(\int_{\mathbb{R}^3} \partial_{z_n} \eta(z) z_i  dz\right) \partial_i u_n(x) f(x)~&\text{strongly~in~} L^{p}(0,T;L^{q}(\mathbb{R}^3)), ~\text{as} ~\varepsilon \to 0\\
&=\partial_n u_n(x) f(x).
\ea\ee
Here in the last equality, we have used the integration by parts and the facts that $\sum_{i=1}^n \partial_{z_n} z_i=1$ and $\int_{\mathbb{R}^3} \eta(z) dz=1$.

Thus, in combination with \eqref{2.10} and \eqref{2.14}, and using the divergence-free condition, we obtain
\be\label{2.15}\ba
R_1^\varepsilon+R_2^\varepsilon &\to (\sum_{i=1}^{n-1} \partial_i u_i+\partial_n u_n) f=0, ~&\text{strongly~in~} L^{p}(0,T;L^{q}(\mathbb{R}^3)), ~\text{as} ~\varepsilon \to 0.
\ea\ee
Thus, the proof of this lemma is completed.
\end{proof}
Finally, to end this section, we give the definition of  weak solutions to the 3D incompressible anisotropic Navier-Stokes equations.
\begin{definition}\label{weak solution}
	Let $\Omega =\mathbb{T}^3~\text{or}~\mathbb{R}^3$. We call a vector field $u$ a weak solution to the 3D incompressible ansotropic Navier-Stokes equations \eqref{anNS} if
	\begin{equation}\label{2.17}
		u\in L^\infty(0,T;L^2(\Omega)),\qquad \nabla_h u\in L^2(0,T;L^2(\Omega)),
	\end{equation}
	it is weakly divergence-free, i.e.,
$$
	\int_{\Omega} u\cdot\nabla\psi\,dx = 0
$$
	holds for all $\psi\in C_0^\infty(\Omega)$ and almost every $t$, and
	\begin{equation}\label{2.18}
		\int_0^T\int_{\Omega}
		\partial_t\phi\cdot u
		+\nabla\phi:(u\otimes u)
		-\nabla_{h}\phi\cdot\nabla_{h}u
		\,dx\,dt = 0
	\end{equation}
	holds for every divergence-free $\phi\in C_0^\infty((0,T)\times\Omega)$.
\end{definition}

 \section{Energy conservation   of  3D anisotropic Navier-Stokes equations}
In this section, we are devoted to the proof of the energy conservation of weak solutions to the 3D anisotropic Navier-Stokes equations \eqref{anNS}, for both the periodic domain  $\mathbb{T}^3$ and the whole space $\mathbb{R}^3$.
\subsection{Torus case: $\mathbb{T}^3$}\label{subsec1}
\begin{proof}[Proof of Theorem \ref{the1.1}]

 Mollifying the equations \eqref{anNS}  in spatial direction, we arrive at
\be\ba\label{rmhd}
&\partial_{t}{u^{\varepsilon}}-\Delta_{h} u^\varepsilon +   (u\cdot\nabla u)^{\varepsilon} +\nabla\Pi^{\varepsilon}= 0.
\ea\ee
Multiplying  \eqref{rmhd} by $u^{\varepsilon}$ and integrating it with respect to $x$ and $t$, we conclude by the divergence-free condition and integration by parts that

\begin{equation}\ba\label{3.2}
&\f12\|u ^{\varepsilon}(T)\|^{2}_{L^{2}(\mathbb{T}^3)}
-\f12\|{u }^{\varepsilon}(0) \|^{2}_{L^{2}(\mathbb{T}^3)} +\int_{0}^{T}\|\nabla_{h}{u }^{\varepsilon}(s) \|^{2}_{L^{2}(\mathbb{T}^3)} dt \\=& - \int_{0}^{T}\int_{\mathbb{T}^{3}} (u\cdot \nabla u_{h})^{\varepsilon}\cdot u_{h}^{\varepsilon} dxdt - \int_{0}^{T}\int_{\mathbb{T}^{3}} (u\cdot \nabla u_{3})^{\varepsilon}u_{3}^{\varepsilon} dxdt\\
=&I+II.
\ea\end{equation}
According to the divergence-free condition again, we get
$$  \int_{0}^{T}\int_{\mathbb{T}^{3}} (u^{\varepsilon}\cdot \nabla u_{h}^{\varepsilon})\cdot u_{h}^{\varepsilon} dxdt =0$$
As a result, we rewrite $I$ as
\be\label{3.3}\ba
 &- \int_{0}^{T}\int_{\mathbb{T}^{3}} (u\cdot \nabla u_{h})^{\varepsilon}\cdot u_{h}^{\varepsilon} dxdt\\
= & \int_{0}^{T}\int_{\mathbb{T}^{3}} [(u_{h}^{\varepsilon}\cdot \nabla_{h} u_{h}^{\varepsilon})-(u_{h}\cdot \nabla_{h} u_{h})^{\varepsilon}]\cdot u_{h}^{\varepsilon} dxdt+\int_{0}^{T}\int_{\mathbb{T}^{3}} [(u_{3}^{\varepsilon}\partial_{3} u_{h}^{\varepsilon})-(u_{3}\partial_{3} u_{h})^{\varepsilon}]\cdot u_{h}^{\varepsilon} dxdt\\
=&I_1+I_2.
\ea\ee
For the last term on the right-hand side of \eqref{3.3}, invoking the integration by parts and the divergence-free condition, we see that
\begin{align}
&\int_{0}^{T}\int_{\mathbb{T}^{3}} [(u_{3}^{\varepsilon}\partial_{3} u_{h}^{\varepsilon})-(u_{3}\partial_{3} u_{h})^{\varepsilon}]\cdot u_{h}^{\varepsilon} dxdt\nonumber\\
=&-\int_{0}^{T}\int_{\mathbb{T}^{3}} [(\partial_{3}u_{3}^{\varepsilon} u_{h}^{\varepsilon})-(\partial_{3} u_{3} u_{h})^{\varepsilon}]\cdot u_{h}^{\varepsilon} dxdt
+\int_{0}^{T}\int_{\mathbb{T}^{3}} \partial_{3}[(u_{3}^{\varepsilon} u_{h}^{\varepsilon})-(  u_{3} u_{h})^{\varepsilon}]\cdot u_{h}^{\varepsilon} dxdt\nonumber\\
=&-\int_{0}^{T}\int_{\mathbb{T}^{3}} [(\partial_{3}u_{3}^{\varepsilon} u_{h}^{\varepsilon})-(\partial_{3} u_{3} u_{h})^{\varepsilon}]\cdot u_{h}^{\varepsilon} dxdt
-\int_{0}^{T}\int_{\mathbb{T}^{3}} [(u_{3}^{\varepsilon} u_{h}^{\varepsilon})-(  u_{3} u_{h})^{\varepsilon}]\cdot \partial_{3}u_{h}^{\varepsilon} dxdt\nonumber\\
=& \int_{0}^{T}\int_{\mathbb{T}^{3}} [(\partial_{1}u_{1}^{\varepsilon} u_{h}^{\varepsilon}+ \partial_{2}u_{2}^{\varepsilon}u_{h}^{\varepsilon})
-(\partial_{1} u_{1} u_{h}+\partial_{2} u_{2} u_{h})^{\varepsilon}]\cdot u_{h}^{\varepsilon} dxdt\nonumber
\\&-\int_{0}^{T}\int_{\mathbb{T}^{3}} [(u_{3}^{\varepsilon} u_{h}^{\varepsilon})-(  u_{3} u_{h})^{\varepsilon}]\cdot \partial_{3}u_{h}^{\varepsilon} dxdt\nonumber\\
=& \int_{0}^{T}\int_{\mathbb{T}^{3}} [\partial_{1}u_{1}^{\varepsilon} u_{h}^{\varepsilon} -(\partial_{1} u_{1} u_{h} )^{\varepsilon}]\cdot u_{h}^{\varepsilon} dxdt+ \int_{0}^{T}\int_{\mathbb{T}^{3}} [\partial_{2}u_{2}^{\varepsilon}u_{h}^{\varepsilon}
-( \partial_{2} u_{2} u_{h})^{\varepsilon}]\cdot u_{h}^{\varepsilon} dxdt\nonumber
\\&-\int_{0}^{T}\int_{\mathbb{T}^{3}} [(u_{3}^{\varepsilon} u_{h}^{\varepsilon})-(  u_{3} u_{h})^{\varepsilon}]\cdot \partial_{3}u_{h}^{\varepsilon} dxdt\nonumber\\
=&I_{21}+I_{22}+I_{23}\label{3.4}.
\end{align}
 We assert that the following condition
\be\label{3.5}
\nabla_{h}u\in L^{p}(0,T;L^{ q}( \mathbb{T}^{3})),u\in L^{\f{2p}{p-1}}(0,T;L^{ \f{2q}{q-1}}( \mathbb{T}^{3}))\ee
implies the energy equality \eqref{ei}.\\
Actually, the incompressible condition allows us to deduce from \eqref{3.5}  that
\be\label{3.6}
\nabla_{h}u, \nabla u_{3} \in L^{p}(0,T;L^{ q}( \mathbb{T}^{3})), u\in   L^{\f{2p}{p-1}}(0,T;L^{ \f{2q}{q-1}}( \mathbb{T}^{3})).
\ee
Making use of the H\"older inequality, we discover that
\be\ba
 I_{21}=&\B|\int_{0}^{T}\int_{\mathbb{T}^{3}} [\partial_{1}u_{1}^{\varepsilon} u_{h}^{\varepsilon} -(\partial_{1} u_{1} u_{h} )^{\varepsilon}]\cdot u_{h}^{\varepsilon} dxdt\B|\\
 \leq & \|\partial_{1}u_{1}^{\varepsilon} u_{h}^{\varepsilon} -(\partial_{1} u_{1} u_{h} )^{\varepsilon}\|_{L^{\f{2p}{p+1}}(0,T;L^{ \f{2q}{q+1}}( \mathbb{T}^{3}))}  \|  u_{h}^{\varepsilon} \|_{L^{\f{2p}{p-1}}(0,T;L^{ \f{2q}{q-1}}( \mathbb{T}^{3}))}.
\ea\ee
Owing to \eqref{3.6},
we conclude by Lemma \eqref{lem2.3} that
\be
\lim_{\varepsilon\rightarrow0}\|\partial_{1}u_{1}^{\varepsilon} u_{h}^{\varepsilon} -(\partial_{1} u_{1} u_{h} )^{\varepsilon}\|_{L^{\f{2p}{p+1}}(0,T;L^{ \f{2q}{q+1}}( \mathbb{T}^{3}))}=0,
\ee
which turns out that
\be\label{3.9}
\lim_{\varepsilon\rightarrow0}I_{21}=0.
\ee
Likewise,
\be\label{3.10}
\lim_{\varepsilon\rightarrow0}I_{22}=0,
\ee
and
\be\label{3.11}
\lim_{\varepsilon\rightarrow0}I_{1}=0.
\ee

Before going further, together with $ u_{3}\in L^\infty(0,T;L^2(\mathbb{T}^3))$, we conclude that $\nabla u_{3}\in L^p(0,T;L^q(\mathbb{T}^3))$ implies $  u_{3}\in L^p(0,T;W^{1,q}(\mathbb{T}^3))$. Indeed,  for $1\leq q\leq 2$, we derive from the H\"older inequality that
 	$$\| u_{3}\|_{L^p(0,T;L^q(\mathbb{T}^3))}\leq C\| u_{3}\|_{L^p(0,T;L^2(\mathbb{T}^3))}\leq C\| u_{3}\|_{L^\infty(0,T;L^2(\mathbb{T}^3))}.$$
 	For $q>2$, we conclude by the
 	Gagliardo-Nirenberg inequality that
 	 $$\ba
 \| u_{3}\|_{L^p(0,T;L^q(\mathbb{T}^3))}\leq& C\| u_{3}\|_{L^p(0,T;L^2(\mathbb{T}^3))}^{\f{2q}{5q-6}}\|\nabla u_{3}\|_{L^p(0,T;L^q(\mathbb{T}^3))}^{\f{3q-6}{5q-6}}+C\|u\|_{L^\infty(0,T;L^2(\mathbb{T}^3))}\\
 \leq &C\| u_{3}\|_{L^\infty(0,T;L^2(\mathbb{T}^3))}^{\f{2q}{5q-6}}\|\nabla u_{3}\|_{L^p(0,T;L^q(\mathbb{T}^3))}^{\f{3q-6}{5q-6}}+C\| u_{3}\|_{L^\infty(0,T;L^2(\mathbb{T}^3))}.
 \ea$$
 Consequently,
 $\nabla u_{3} \in L^{p}(0,T;L^{ q}( \mathbb{T}^{3}))$ means that
  $  u_{3} \in L^{p}(0,T;W^{1,q}( \mathbb{T}^{3})).$ With this in hand, we continue to deal with $I_{23}.$
In view of the H\"older inequality, we observe that
\be\ba\label{3.12}
&\B| \int_0^T\int_{\mathbb{T}^{3}}    \B[(u_{3} u_{h})^{\varepsilon}- (u_3^{\varepsilon}  u_{h}^{\varepsilon})\B] \partial_{3}u_{h}^{\varepsilon}dxdt\B|\\
\leq& C\|(u_{3} u_{h})^{\varepsilon}- (u_3^{\varepsilon}  u_{h}^{\varepsilon}) \|_{L^{\f{2p}{p+1}}(0,T;L^{ \f{2q}{q+1}}( \mathbb{T}^{3}))} \|\nabla u_{h}^{\varepsilon} \|_{L^{\f{2p}{p-1}}(0,T;L^{ \f{2q}{q-1}}( \mathbb{T}^{3}))}.
\ea\ee
On the one hand, taking advantage  of
  $u\in   L^{\f{2p}{p-1}}(0,T;L^{ \f{2q}{q-1}}( \mathbb{T}^{3}))$ and Lemma \ref{lem2.1}, we notice that
\be\label{3.13}\ba
& \|\nabla u_{h}^{\varepsilon} \|_{  L^{\f{2p}{p-1}}(0,T;L^{ \f{2q}{q-1}}( \mathbb{T}^{3}))}\leq C\varepsilon^{-1}\|u_{h}\|_{  L^{\f{2p}{p-1}}(0,T;L^{ \f{2q}{q-1}}( \mathbb{T}^{3}))},\\ &\limsup_{\varepsilon\rightarrow0}\varepsilon \|\nabla u_h^{\varepsilon}\|_{  L^{\f{2p}{p-1}}(0,T;L^{ \f{2q}{q-1}}( \mathbb{T}^{3}))}=0.
\ea\ee
On the other hand, it follows from the Lemma \ref{lem2.7} and $u_{3}\in L^{p}(0,T;W^{1, q}( \mathbb{T}^{3}))$ that
\begin{equation}\label{3.14} \begin{aligned}
&\|(u_{3} u_{h})^{\varepsilon}- (u_3^{\varepsilon}  u_{h}^{\varepsilon}) \|_{L^{\f{2p}{p+1}}(0,T;L^{ \f{2q}{q+1}}( \mathbb{T}^{3}))}  \leq C\varepsilon\|u_{3}\|_{L^{p}(0,T;W^{1,  q }( \mathbb{T}^{3}))}  \| u_{h}\|_{L^{\f{2p}{p-1}}(0,T;L^{ \f{2q}{q-1}}( \mathbb{T}^{3}))}.
\end{aligned}\end{equation}
Plugging this into \eqref{3.12}, we observe that
$$\ba\B| &\int_0^T\int_{\mathbb{T}^{3}}    \B[(u_{3} u_{h})^{\varepsilon}- (u_3^{\varepsilon}  u_{h}^{\varepsilon})\B] \partial_{3}u_{h}^{\varepsilon}dxdt\B|\\
\leq& C\|(u_{3} u_{h})^{\varepsilon}- (u_3^{\varepsilon}  u_{h}^{\varepsilon}) \|_{L^{\f{2p}{p+1}}(0,T;L^{ \f{2q}{q+1}}( \mathbb{T}^{3}))}  \|\nabla u_{h}^{\varepsilon} \|_{L^{p}(0,T;L^{q}( \mathbb{T}^{3}))}\\
\leq&\varepsilon \|\nabla u_h^{\varepsilon}\|_{L^{p}(0,T;L^{q}( \mathbb{T}^{3}))},
\ea$$
which together with \eqref{3.13} leads to
\be\label{3.15}
\lim_{\varepsilon\rightarrow0}I_{23}=0.\ee
Then in combination with \eqref{3.4}, \eqref{3.9}--\eqref{3.11} and \eqref{3.15}, we have
\be\label{3.16}
\lim_{\varepsilon\rightarrow0}I=0.
\ee
Next, we are ready to deal with the remainder term $II$. Firstly, thanks to
$$\int_{0}^{T}\int_{\mathbb{T}^{3}} (u^{\varepsilon}\cdot \nabla u_{3}^{\varepsilon})u_{3}^{\varepsilon} dxdt=0,$$
we reformulate $II$ as
\be\ba
&- \int_{0}^{T}\int_{\mathbb{T}^{3}} (u\cdot \nabla u_{3})^{\varepsilon}u_{3}^{\varepsilon} dxdt\\
= & \int_{0}^{T}\int_{\mathbb{T}^{3}} [(u_{h}^{\varepsilon}\cdot \nabla_{h} u_{3}^{\varepsilon})-(u_{h}\cdot \nabla_{h} u_{3})^{\varepsilon}]u_{3}^{\varepsilon} dxdt+\int_{0}^{T}\int_{\mathbb{T}^{3}} [(u_{3}^{\varepsilon}\partial_{3} u_{3}^{\varepsilon})-(u_{3}\partial_{3} u_{3})^{\varepsilon}]u_{3}^{\varepsilon} dxdt\\
=&\int_{0}^{T}\int_{\mathbb{T}^{3}} [(u_{h}^{\varepsilon}\cdot \nabla_{h} u_{3}^{\varepsilon})-(u_{h}\cdot \nabla_{h} u_{3})^{\varepsilon}]u_{3}^{\varepsilon} dxdt\\&-\int_{0}^{T}\int_{\mathbb{T}^{3}} [(u_{3}^{\varepsilon}(\partial_{1} u_{1}^{\varepsilon}+\partial_{2} u_{2}^{\varepsilon}))-(u_{3} \partial_{1} u_{1})^{\varepsilon}-(u_{3}\partial_{2} u_{2})^{\varepsilon}]u_{3}^{\varepsilon} dxdt\\
=&\int_{0}^{T}\int_{\mathbb{T}^{3}} [(u_{h}^{\varepsilon}\cdot \nabla_{h} u_{3}^{\varepsilon})-(u_{h}\cdot \nabla_{h} u_{3})^{\varepsilon}]u_{3}^{\varepsilon} dxdt\\&-\int_{0}^{T}\int_{\mathbb{T}^{3}} [(u_{3}^{\varepsilon} \partial_{1} u_{1}^{\varepsilon}-(u_{3} \partial_{1} u_{1})^{\varepsilon}]u_{3}^{\varepsilon} dxdt-\int_{0}^{T}\int_{\mathbb{T}^{3}}[u_{3}^{\varepsilon}\partial_{2} u_{2}^{\varepsilon} -(u_{3}\partial_{2} u_{2})^{\varepsilon}]u_{3}^{\varepsilon} dxdt\\
=&II_1+II_2+II_3.
\ea\ee
By virtue of the H\"older inequality,
\be\ba|II_1|\leq \|(u_{h}^{\varepsilon}\cdot \nabla_{h} u_{3}^{\varepsilon})-(u_{h}\cdot \nabla_{h} u_{3})^{\varepsilon}\|_{L^{\f{2p}{p+1}}(0,T;L^{ \f{2q}{q+1}}( \mathbb{T}^{3}))}  \|  u_{3}^{\varepsilon} \|_{  L^{\f{2p}{p-1}}(0,T;L^{ \f{2q}{q-1}}( \mathbb{T}^{3}))}.
\ea\ee
In view of \eqref{3.6},
we deduce from Lemma \eqref{lem2.3} that
\be
\lim_{\varepsilon\rightarrow0}\|(u_{h}^{\varepsilon}\cdot \nabla_{h} u_{3}^{\varepsilon})-(u_{h}\cdot \nabla_{h} u_{3})^{\varepsilon}\|_{L^{\f{2p}{p+1}}(0,T;L^{ \f{2q}{q+1}}( \mathbb{T}^{3}))}
=0,\ee
which implies that
\be
\lim_{\varepsilon\rightarrow0}II_1=0.\ee
By the same token, we also have
\be
\lim_{\varepsilon\rightarrow0}II_2=\lim_{\varepsilon\rightarrow0}II_3=0.\ee
As a consequence, we conclude that
\be\label{3.22}
\lim_{\varepsilon\rightarrow0}II =0.\ee
Hence, combining \eqref{3.16} and \eqref{3.22}, we verified the assertment \eqref{3.5}.
Now, we are  in a position to  pass to the limit of $\varepsilon$ in \eqref{3.2} to get the energy conservation \eqref{ei}.

{\bf (1) } As the natural energy gives $\nabla_h u \in L^{2}(0,T;L^2(\mathbb{T}^3))$, by choosing $p=q=2$ in \eqref{3.5}, we 	immediately prove that the condition $u\in L^{4}(0,T;L^{4}(\mathbb{T}^3))$ yields the energy equality. It is worth remarking that
the rest proof  in Theorem \ref{the1.1} can be reduced to this special case.

{\bf (2)} Next, we deal with the case (2) in Theorem
\ref{the1.1} with $q\geq 4$ and $\frac{2}{p}+\frac{2}{q}=1$.
The  Gagliardo-Nirenberg   inequality  guarantees that
\begin{equation}
\begin{aligned}
\|u\|_{L^{4}(0,T;L^{4}(\mathbb{T}^3) )}  \leq& C\|u\|_{L^{\infty}\left(0,T;L^{2}(\mathbb{T}^{3})\right)}^{\frac{q-4}{2 q-4}}\|u\|_{L^{p}(0,T;L^q(\mathbb{T}^3))}^{\frac{q}{2q-4}}\leq C.
\end{aligned}
\end{equation}
From the result just proved, we obtain the  energy equality via (1) in Theorem \ref{the1.1}  with $q\geq 4$. \\

{\bf (3)} Now, we focus on the proof of (3). Indeed, note that $  u\in L^{p}(0,T;W^{1,q}(\mathbb{T}^{3}))$, therefore, according to \eqref{3.5},
it suffices to derive  $u\in L^{\f{2p}{p-1}}(0,T;L^{\f{2q}{q-1}}(\mathbb{T}^3)) $   from  \eqref{3.5}.  For $q\geq \frac{9}{5}$, by the Gagliardo-Nirenberg   inequality, we get
 \begin{equation}\label{c19}
 \begin{aligned}
  \|u\|_{L^{\f{2q}{q-1}}(\mathbb{T}^3)}\leq C\|u\|_{L^{2}(\mathbb{T}^3)}^{\f{5q-9}{5q-6}}\|\nabla u\|_{L^{q}(\mathbb{T}^3)}^{\f{3}{5q-6}}.
  \end{aligned}\end{equation}
  Thanks to $\frac{1}{p}+\frac{6}{5 q}=1$, we further infer that
\begin{equation}\label{c20}
\begin{aligned}
  \|u\|_{L^{\f{2p}{p-1}}(0,T;L^{\f{2q}{q-1}}(\mathbb{T}^3))}\leq C \|u\|_{L^{\infty}(0,T;L^{2}(\mathbb{T}^3))}^{\f{5q-9}{5q-6}}\|\nabla u\|_{L^{p}(0,T;L^{q}(\mathbb{T}^3))}^{\frac{3}{5q-6}}\leq C.
  \end{aligned}\end{equation}
In light of \eqref{3.5}, we have proved case (3) in Theorem \ref{the1.1} for $q\geq \frac{9}{5}$.

Thus, we complete the proof of Theorem \ref{the1.1}.
\end{proof}
\subsection{Whole space case: $\mathbb{R}^3$}\label{subsec2}
\begin{proof}[Proof of Theorem \ref{the1.2}]
Following the proof strategy of Theorem \ref{the1.1} and recalling \eqref{3.2}, we obtain
\begin{equation}\ba\label{3.26}
	&\f12\|u ^{\varepsilon}(T)\|^{2}_{L^{2}(\mathbb{R}^3)}
	-\f12\|{u }^{\varepsilon}(0) \|^{2}_{L^{2}(\mathbb{R}^3)} +\int_{0}^{T}\|\nabla_{h}{u }^{\varepsilon}(s) \|^{2}_{L^{2}(\mathbb{R}^3)} dt \\=& - \int_{0}^{T}\int_{\mathbb{R}^{3}} (u\cdot \nabla u_{h})^{\varepsilon}\cdot u_{h}^{\varepsilon} dxdt - \int_{0}^{T}\int_{\mathbb{R}^{3}} (u\cdot \nabla u_{3})^{\varepsilon}u_{3}^{\varepsilon} dxdt\\
	=&I+II.\ea\end{equation}
To deal with the first term,  the incompressiabe condition  helps us to get
\be\label{3.27}\ba &\int_{0}^{T}\int_{\mathbb{R}^{3}} (u\cdot \nabla u_{h}^{\varepsilon})\cdot u_{h}^{\varepsilon} dxdt\\
=& \int_{0}^{T}\int_{\mathbb{R}^{3}}\s(u u_{1}^{\varepsilon})\cdot u_{1}^{\varepsilon} dxdt+\int_{0}^{T}\int_{\mathbb{R}^{3}}\s(u u_{2}^{\varepsilon})\cdot u_{2}^{\varepsilon} dxdt=0,\ea\ee
and
\be\label{3.28}\ba
 &- \int_{0}^{T}\int_{\mathbb{R}^{3}} (u\cdot \nabla u_{h})^{\varepsilon}\cdot u_{h}^{\varepsilon} dxdt\\
= & -\int_{0}^{T}\int_{\mathbb{R}^{3}} \s(u u_{1})^{\varepsilon}\cdot u_{1}^{\varepsilon} dxdt-\int_{0}^{T}\int_{\mathbb{R}^{3}} \s(u u_{2})^{\varepsilon}\cdot u_{2}^{\varepsilon} dxdt.\\
\ea\ee
In combination with \eqref{3.27} and \eqref{3.28}, we can reformulate the first term as
\be\label{3.29}\ba
 I=&- \int_{0}^{T}\int_{\mathbb{R}^{3}} (u\cdot \nabla u_{h})^{\varepsilon}\cdot u_{h}^{\varepsilon} dxdt\\
 =&\int_{0}^{T}\int_{\mathbb{R}^{3}} \s[u u_{1} ^{\varepsilon}-(u u_{1})^{\varepsilon}]\cdot u_{1}^{\varepsilon} dxdt+\int_{0}^{T}\int_{\mathbb{R}^{3}} \s[u u_{2} ^{\varepsilon}-(u u_{2})^{\varepsilon}]\cdot u_{2}^{\varepsilon} dxdt.
\ea\ee
By the same token, we also have
\be\label{3.30}\ba
II&= - \int_{0}^{T}\int_{\mathbb{T}^{3}} (u\cdot \nabla u_{3})^{\varepsilon}u_{3}^{\varepsilon} dxdt=\int_0^T\int_{\mathbb{R}^3}\s [uu_3^\varepsilon-(u u_3)^\varepsilon]\cdot u_3^\varepsilon dxdt.
\ea\ee
Before going further, we assert that
\be\label{3.31}
\ba
\nabla_{h}u\in L^{p}(0,T;L^{ q}( \mathbb{R}^{3})),u\in L^{\f{2p}{p-1}}(0,T;L^{ \f{2q}{q-1}}( \mathbb{R}^{3}))\ea\ee
implies the energy equality \eqref{ei}.

Next, we prove that the terms on the right-hand side of \eqref{3.26} vanish as \(\varepsilon\to 0\). Contrary to the strategy adopted for the torus domain, we employ the anisotropic version of Lions' commutator estimates, instead of Constantin-E-Titi-type commutators, to handle these terms. Precisely, recall \eqref{3.31} and using the H\"older inequality, it gives
\be\label{3.32}\ba
 &\B|- \int_{0}^{T}\int_{\mathbb{R}^{3}} (u\cdot \nabla u_{h})^{\varepsilon}\cdot u_{h}^{\varepsilon} dxdt\B|\\
\leq&\|\s[u u_{1} ^{\varepsilon}-(u u_{1})^{\varepsilon}]\|_{L^{\f{2p}{p+1}}(0,T;L^{\f{2q}{q+1}}(\mathbb{R}^3))}\|u_{1}^{\varepsilon}\| _{L^{\f{2p}{p-1}}(0,T;L^{\f{2q}{q-1}}(\mathbb{R}^3))}\\
&~~~+\|\s[u u_{2} ^{\varepsilon}-(u u_{2})^{\varepsilon}]\|_{L^{\f{2p}{p+1}}(0,T;L^{\f{2q}{q+1}}(\mathbb{R}^3))}\|u_{2}^{\varepsilon}\| _{L^{\f{2p}{p-1}}(0,T;L^{\f{2q}{q-1}}(\mathbb{R}^3))}.
\ea\ee
Then,  by virtue of  lemma \ref{pLions} with the choices $u=u,f=u_1~\text{or}~u_2$, $p_1=p_4=\f{2p}{p-1}, q_1=q_4=\f{2q}{q-1}$, $p_3=p_2=p, q_3=q_2=q$ and $p=\f{2p}{p+1}, q=\f{2q}{q+1}$, we conclude
\be\label{3.33}\ba
&\s[u u_{1} ^{\varepsilon}-(u u_{1})^{\varepsilon}]\to 0, \quad\text{strongly~in } {L^{\f{2p}{p+1}}(0,T;L^{\f{2q}{q+1}}(\mathbb{R}^3))},~\text{as}~\varepsilon\to 0;\\
&\s[u u_{2} ^{\varepsilon}-(u u_{2})^{\varepsilon}] \to 0, \quad\text{ strongly~in } {L^{\f{2p}{p+1}}(0,T;L^{\f{2q}{q+1}}(\mathbb{R}^3))},~\text{as}~\varepsilon\to 0.
\ea\ee
Then together with \eqref{3.32} and \eqref{3.33}, we have
\be\label{3.34}
|I|=\B|- \int_{0}^{T}\int_{\mathbb{R}^{3}} (u\cdot \nabla u_{h})^{\varepsilon}\cdot u_{h}^{\varepsilon} dxdt\B|\to 0, ~\text{as}~\varepsilon \to0.
\ee
For term $II$, we note that the divergence-free condition together with \eqref{3.31} guarantees full regularity for $u_3$. Indeed, $\nabla_h u\in L^p(0,T;L^q(\mathbb R^3))$ implies $\nabla_h u_h\in L^p(0,T;L^q(\mathbb R^3))$. From the divergence-free constraint $\s u=0$, we have $\partial_3 u_3 = -\partial_h\cdot u_h$, which further yields $\nabla u_3\in L^p(0,T;L^q(\mathbb R^3))$.

Based on this, using the integration by parts and the H\"older inequality, we get
\be\label{3.35}\ba
|II|&= |\int_0^T\int_{\mathbb{R}^3}\s [uu_3^\varepsilon-(u u_3)^\varepsilon]\cdot u_3^\varepsilon dxdt|\\
&=|-\int_0^T\int_{\mathbb{R}^3} [uu_3^\varepsilon-u u_3+ u u_3-(u u_3)^\varepsilon]\cdot \nabla u_3^\varepsilon dxdt|\\
&\leq C\|uu_3^\varepsilon-u u_3+ u u_3-(u u_3)^\varepsilon\|_{L^{\f{p}{p-1}}(0,T;L^{\f{q}{q-1}}(\mathbb{R}^3))}\|\nabla u_3^\varepsilon \|_{L^{p}(0,T;L^{q}(\mathbb{R}^3))}\\
&\leq C\B( \|u\|_{L^{\f{2p}{p-1}}(0,T;L^{\f{2q}{q-1}}(\mathbb{R}^3))}\|u_3-u_3^\varepsilon\|_{L^{\f{2p}{p-1}}(0,T;L^{\f{2q}{q-1}}(\mathbb{R}^3))}\\
&~~~~~~~~~+\| u u_3 -(u u_3)^\varepsilon \|_{L^{\f{p}{p-1}}(0,T;L^{\f{q}{q-1}}(\mathbb{R}^3))}\B)\|\nabla u_3^\varepsilon \|_{L^{p}(0,T;L^{q}(\mathbb{R}^3))},
\ea\ee
which together with the standard properties of the mollification yields
\be\label{3.36}
|II|= |\int_0^T\int_{\mathbb{R}^3}\s [uu_3^\varepsilon-(u u_3)^\varepsilon]\cdot u_3^\varepsilon dxdt| \to 0, ~\text{as}~\varepsilon \to 0.\ee
Hence, combining \eqref{3.34} and \eqref{3.35}, we conclude the proof of assertion \eqref{3.31}.  Since the proof below is analogous to that in the previous subsection, Theorem \ref{the1.2} follows by repeating the arguments therein, and we omit the details.
\end{proof}

\section*{Acknowledgements}
  Ye was
 partially sponsored by the Training Program for Young Backbone Teachers in
 Higher Education Institutions of Henan Province (2024GGJS022). Wei was partially supported by the National Natural Science Foundation of China under grant (No. 12271433). Wang was sponsored by Natural Science Foundation of
Henan Province (No. 232300421077)  and supported by  the National Natural
Science Foundation of China under grant (No. 11971446 and No. 12071113).

{\bf Author Declarations statement}

The authors have no conflicts to disclose.

{\bf Data Availability}

This publication is supported by multiple datasets, which are openly available at locations cited in the reference section.


\begin{thebibliography}{00}














\bibitem{[BY2]}
H. Beirao da Veiga and  J. Yang,   On the energy equality for solutions to Newtonian and non-Newtonian fluids. Nonlinear Anal. 185 (2019), 388--402.




\bibitem{[BY]}H. Beirao da Veiga and J. Yang, On the Shinbrot's criteria for energy equality to Newtonian fluids: a simplified proof, and an extension of the range of application. Nonlinear Anal. 196 (2020), 111809, 4 pp.
\bibitem{[BY2]}H. Beirao da Veiga and J. Yang, The relations between H\"older continuity assumptions
on the direction of vorticity and energy equality. J. Geom. Anal. 35 (2025), 15.




\bibitem{[Berselli2]} L. C. Berselli, Energy conservation for weak solutions of incompressible fluid equations:
The H\"older case and connections with Onsager's conjecture. J. Differential Equations,
368 (2023), 350--375.



\bibitem{[BC]}
L. C. Berselli and  E. Chiodaroli,  On the energy equality for the 3D Navier-Stokes
equations. Nonlinear Anal. 192 (2020), 111704, 24 pp.

\bibitem{[BCS]}
L. C. Berselli and  E. Chiodaroli and R. Sannipoli, Energy conservation for 3D Euler and Navier-Stokes equations in a bounded domain: applications to Beltrami flows. J. Nonlinear Sci. 35 (2025), no. 1, Paper No. 10, 30 pp.

\bibitem{[BG]} L. C. Berselli and S.  Georgiadis,  Three results on the Energy conservation for the 3D
Euler equations.  Nonlinear Differ. Equ. Appl. 31, 33 (2024), 1--14.


\bibitem{[BKR]}
L. C. Berselli, A. Kaltenbach and M. R\r{u}\v{z}i\v{c}ka, Energy conservation for weak solutions of incompressible Newtonian fluid equations in H\"older spaces with Dirichlet boundary conditions in the half-space. Math. Ann. 391 (2025), no. 4, 5911--5940.


\bibitem{[BFWZ]}
Q. Bie, H. Fang, S. Wang and Y. Zhou, Stability and sharp decay for the 3D incompressible anisotropic Navier-Stokes equations with fractional horizontal dissipation. J. Differential Equations 463 (2026), Paper No. 114167, 38 pp.
\bibitem{[CG]}
C. Cao and Y. Guo, On the two-dimensional Navier-Stokes equations with horizontal viscosity.
J. Math. Anal. Appl. 557 (2026), no. 2, Paper No. 130352, 24 pp.
\bibitem{[CW1]}
C. Cao and J. Wu, Stability of the 3D Navier-Stokes equations with anisotropic dissipation. Nonlinearity. 38 (2025), no. 9, Paper No. 095012, 10 pp.
\bibitem{[CW]}
C. Cao and J. Wu, Global regularity for the two-dimensional anisotropic Boussinesq equations with vertical dissipation.
Arch. Ration. Mech. Anal.  208(2013), no. 3, 985--1004.
\bibitem{[CDGG]} J. Chemin, B. Desjardins, I. Gallagher and E. Grenier. Fluids with anisotropic viscosity. M2AN Math.
Model. Numer. Anal. 34(2000), no. 2, 315--335.



\bibitem{[CZZ]}
J. Chemin, P. Zhang and Z. Zhang, On the critical one component regularity for 3-D
Navier-Stokes system: general case. Arch. Ration. Mech. Anal. 224 (2017), 871--905.
\bibitem{[CCFS]}
A. Cheskidov,  P. Constantin, S. Friedlander and R. Shvydkoy, Energy conservation and Onsager's conjecture for the Euler equations. Nonlinearity, 21 (2008), 1233--1252.



\bibitem{[CL]}
   A. Cheskidov and X. Luo, Energy equality for the Navier-Stokes equations in weak-in-time Onsager spaces. Nonlinearity, 33 (2020), 1388--1403.

\bibitem{[CET]} P. Constantin,  W. E and   E. S. Titi,  Onsager's conjecture on the energy conservation for solutions of Euler's equation. Commun. Math. Phys. 165 (1994), 207--209.


\bibitem{[DW]}
J. Demmel and E. Wiedemann
Energy rigidity and weak-strong uniqueness for the 2D anisotropic Navier-Stokes equations.
 arXiv.2608.19931.

\bibitem{[DS0]}
C. De Lellis and  L.  J.  Sz\'ekelyhidi,
The Euler equations as a differential inclusion.
Ann. Math. 170 (2009), 1417--1436.

\bibitem{[DS1]}
C. De Lellis and  L.  J.  Sz\'ekelyhidi,   Dissipative continuous Euler flows. Invent Math. 193 (2013), 377--407.

\bibitem{[FGSW]}
E. Feireisl, P. Gwiazda, A. \'Swierczewska-Gwiazda and E. Wiedemann, Regularity and
energy conservation for the compressible euler equations. Arch Ration Mech Anal. 223
(2017), 1375--1395.
\bibitem{[FW2018]}
U. S. Fjordholm and E.  Wiedemann,  Statistical solutions and Onsager's conjecture.
Phys. D.      376-377 (2018), 259--265.
 \bibitem{[F]}
M. Fujii, Large time behavior of solutions to the 3D anisotropic Navie--Stokes equation. Nonlinear Anal. Real World Appl. 71 (2023), Paper No. 103821, 33 pp.







\bibitem{[Galdi]} G. P. Galdi, An introduction to the Navier-Stokes initial-boundary value problem, in: Fundamental Directions in Mathematical Fluid Mechanics, in: Adv. Math. Fluid Mech., Birkh\"auser, Basel, 2000,   1--70.

\bibitem{[GKN]}   V. Giri, H. Kwon  and M. Novack.
The $L^{3}$-based strong Onsager theorem. Ann. of Math. (2) 204 (2026), no. 1, 265--421.
\bibitem{[GR]}  V. Giri and R. O. Radu, The 2D Onsager conjecture: a Newton-Nash iteration. Invent. math. 238 (2024), 691--768.





\bibitem{[Isett]}
P. Isett,
A proof of Onsager's conjecture.
Ann. of Math.  188 (2018),  871--963.
\bibitem{[JTW]}
R. Ji, L. Tian and J. Wu, 3D anisotropic Navier-Stokes equations in $\mathbb{T}^2\times \mathbb{R}$ : stability and large-time behaviour. Nonlinearity. 36 (2023), no. 6, 3219--3237.

\bibitem{[JWY]}
R. Ji, J. Wu and W. Yang, Stability and optimal decay for the 3D Navier-Stokes equations with horizontal dissipation. J. Differential Equations 290 (2021), 57--77.












 \bibitem{[LS]}
T. M. Leslie and R. Shvydkoy. Conditions implying energy equality for weak solutions of the Navier--Stokes equations. SIAM J. Math. Anal.
 50 (2018), 870--890.
\bibitem{[Lions]}
J. L. Lions, Sur la r\'egularit\'e et l'unicit\'e des solutions turbulentes des \'equations de Navier Stokes, Rend. Semin. Mat. Univ. Padova, 30 (1960) 16--23.

\bibitem{[NNT]}
Q. Nguyen,  P. Nguyen and B. Tang,   Energy equalities for compressible Navier-Stokes equations. Nonlinearity 32 (2019),   4206--4231.
      \bibitem{[Onsager]}
L. Onsager,  Statistical hydrodynamics. Nuovo Cim. (Suppl.) 6 (1949), 279--287.

\bibitem{[Pedlosky]}
J. Pedlosky, Geophysical Fluid Dynamics. Springer-Verlag, Berlin, 1979.
	\bibitem{[Shinbrot]}
M. Shinbrot, The energy equation for the Navier-Stokes system. SIAM J. Math. Anal. 5 (1974) 948--954.
			
\bibitem{[Taniuchi]}
 Y. Taniuchi, On generalized energy equality of the Navier-Stokes equations.
Manuscripta Math. 94 (1997), 365--384.

\bibitem{[WZZ]}
W. Wang, L. Zhang and Z. Zhang,
On the interior regularity criteria of the 3-D navier-stokes equations involving two velocity components. Discrete Contin. Dyn. Syst.
  38 (2018),   2609--2627.

\bibitem{[WY]}
Y. Wang and Y. Ye, A general sufficient criterion for energy conservation in the Navier-Stokes system.  Math Meth Appl Sci.  46 (2023), 9268--9285.
\bibitem{[YJW]}
W. Yang, Q. Jiu and J. Wu, The 3D incompressible Navier-Stokes equations with partial hyperdissipation. Math. Nachr. 292 (2019), no. 8, 1823--1836.
\bibitem{[XZ]}
L. Xu and P. Zhang,  Enhanced dissipation for the third component of 3D anisotropic Navier-Stokes equations. J. Differential Equations 335 (2022), 464-496.


 \bibitem{[YWL]} Y. Ye,  Y. Wang and  J. Liu, Energy and helicity conservation in the incompressible ideal flows. Commun. Math. Sci. 23 (2025), no. 5, 1357--1377.
 \bibitem{[YWW]} Y. Ye,   W. Wei and Y. Wang, Energy equality in the isentropic compressible Navier-Stokes equations allowing vacuum. J. Differential Equations 338 (2022), 551--571.

\bibitem{[Yu]}X. Yu,  A note on the energy conservation of the ideal MHD equations. Nonlinearity. 22 (2009), 913--922.
\bibitem{[Zhang]}
		Z.	Zhang,  Remarks on the energy equality for the non-Newtonian fluids. J. Math. Anal. Appl. 480 (2019), no. 2, 123443, 9 pp.
		\bibitem{[ZZ]}
		P. Zhang and W. Zhu, Continuous dependence on initial data for the solutions of 3-D anisotropic Navier-Stokes equations. J. Funct. Anal. 288 (2025), no. 1, Paper No. 110689, 36 pp.
\bibitem{[Zheng]}
X. Zheng, A regularity criterion for the tridimensional Navier-Stokes equations in term
of one velocity component. J. Differential Equations 256 (2014), 283--309.

\bibitem{[ZW]}
D. Zhou and J. Wu, $H^1$ -uniqueness, stability and decay for anisotropic Navier-Stokes and Boussinesq equations.
J. Differential Equations,   453 (2026),  113930.

\end{thebibliography}
\end{document}